\newcommand{\commentout}[1]{}
\newcommand {\wt}[1] {{\widetilde #1}}
\newcommand {\wh}[1] {{\widehat #1}}
\newcommand{\E}{\mathbb{E}}
\newcommand{\R}{\mathbb{R}}
\newcommand {\al} {\alpha}
\newcommand {\e}  {\varepsilon}
\newcommand {\lb} {\lambda}
\newcommand {\dv}  { {\rm div} }
\newcommand {\f}   {\frac}
\newcommand {\p}   {\partial}
\newcommand{\beq}{\begin{equation}}
\newcommand{\beqa}{\begin{eqnarray}}
\newcommand{\bea} {\begin{array}{ll}}
\newcommand{\beqan}{\begin{eqnarray*}}
\newcommand{\eeq}{\end{equation}}
\newcommand{\eeqa}{\end{eqnarray}}
\newcommand{\eeqan}{\end{eqnarray*}}
\newcommand{\eea} {\end{array}}
\newtheorem{theorem}{Theorem}[section]
\newtheorem{lemma}[theorem]{Lemma}
\newcommand{\cqfd}{{ \hfill
                       {\unskip\kern 6pt\penalty 500
                       \raise -2pt\hbox{\vrule\vbox to 6pt{\hrule width 6pt
                       \vfill\hrule}\vrule} \par}   }}
\title{\Large \bf Stochastic averaging lemmas for kinetic equations}
\author{ Pierre-Louis Lions$^{1}$, Beno\^ \i t Perthame$^{2}$ and Panagiotis E. Souganidis$^{3,4}$}
\date{\today}
\begin{document}
\maketitle

\pagenumbering{arabic}

\begin{abstract}
We develop a class of averaging lemmas for stochastic kinetic equations. The velocity is multiplied by a white noise which produces a remarkable change in time scale. 

Compared to the deterministic case and as far as we work in $L^2$, the nature of regularity on averages is not changed in this stochastic kinetic equation and stays in the range of fractional Sobolev spaces  at the price of an additional expectation. However all the exponents are changed; either time decay rates are slower (when the right hand side belongs to $L^2$), or regularity is better when the right hand side contains derivatives. These changes originate from a different space/time scaling in the deterministic and stochastic cases.

Our motivation comes from scalar conservation laws with stochastic fluxes where the structure under consideration arises naturally through the kinetic formulation of scalar conservation laws.
\end{abstract}
\pagestyle{plain} \vspace*{1.0cm}
\noindent {\bf Key words.}  Stochastic kinetic equations; stochastic conservation laws; averaging lemmas; fractional Sobolev spaces. 
\\
\noindent {\bf AMS Class. Numbers.}  35L65; 35R60; 60H15; 82C40
\tableofcontents

\section{Introduction}
\label{sec:intro}

The kinetic formulation of the stochastic conservation laws, as developed in \cite{LPS_sscl}, motivates to study  the stochastic kinetic equation
\begin{equation}\label{skinetic}
\begin{cases}
\f{\p}{\p t} f(x,\xi,t)  + \dot B(t) \circ \xi . \nabla_x f = g(x, \xi,t )  \quad \text{ in } \quad \R^{2d} \times(0,\infty), 
\\[2mm]
f(t=0,x,\xi)=f^0(x,\xi).
\end{cases}
\end{equation}
where both $f^0$ and $g$ are deterministic. 
The notation for the flux means
$$
\dot B(t) \circ \xi . \nabla_x f = \dot  B(t)  \circ \sum_{i=1}^d  \xi_i \frac{\p f}{\p x_i}.
$$
That is we we use Stratonovich convention. Also we introduce a single brownian motion (for simplicity). At variance with the examples of Hamilton-Jacobi equations \cite{LShjscrasEDP,LShjs} or $x$-dependent fluxes in scalar conservation laws \cite{LPS_sscl}, this does not play an important role here. 
\\

Our results are mainly motivated by the case when the $B$ is a brownian motion. But for reference we will use also the standard deterministic case $B(t)=t$.  In this deterministic case, the averaging lemmas are now sophisticated \cite{DLM,Ger, PS, Bouchut_av, BP_bams}  and have proven to be useful for treating nonlinear kinetic equations as the Vlasov-Maxwell \cite{DiLi} or the Boltzmann equation \cite{DiLi_B,Vi}. 
\\

We understand this equation in the spirit of the works on Hamilton-Jacobi equations in the series of papers by Lions and Souganidis, see \cite{LShjs, LShjscrasEDP} and the references therein. That is, we define solutions as the limit of standard distributional solutions when $B(t)$ is regularized; in other words they correspond to a Stratonovich rather than an It{\=o}-Doeblin integral. This is a difference compared to stochastic  transport equation also treated by Flandoli \cite{Flandoli, Flandoli_ln} or with conservation laws with  stochastic semilinear terms treated by Debussche and Vovelle \cite{dv} or with random kinetic equations, with a stochastic semilinear term that are also treated by Debussche and Vovelle in \cite{dv2} where the diffusion limit is studied.   
\\

The question of the existence is not important here because of linearity and the method of characteristics gives the representation formula
$$
\frac{d}{dt} f\big(x + B(t) \circ \xi, \xi ,t \big) = g\big(x + B(t) \circ \xi, \xi ,t \big) .
$$
This defines our solutions (see \cite{LShjscrasEDP,LShjs}  for a motivation and \cite{LPS_sscl} for a more interesting nonlinear case). This is possible because we consider the Stratonovich convention; as usual it can be transformed in an  It{\=o} integral with an additional diffusion term, \cite{Flandoli, Flandoli_ln}.

For a given test function $\psi(\xi)$ with compact support, we define the random average
$$
\rho_\psi(x,t) =\int \psi(\xi)  f(x,\xi,t) d\xi.
$$
Our interest lies in the {\em velocity averaging lemmas} which are regularity statements on $\rho_\psi$; how to state them? how do the fractional exponents reflect the scales of the brownian motion? 

We answer these questions by a series of results covering a part of what is known for averaging lemmas as stated in long series of papers, from the initial remark on the phenomena \cite{GPS,GLPS} for $p=2$, to the optimal cases of full space derivatives in the right hand side \cite{Ger,PS}, including the case of $L^p$ spaces for $p\neq 2$ \cite{DLM}. 

We work in $L^2$ and we measure time decay thanks to an additional time damping. This is a way to better visualize the different scales between space and time that appear in the stochastic case compared to the deterministic case.

\section{Space regularity, global in time}
\label{sec:space}

When considering space regularity only, we can obtain particularly simple results which are global in time and thus also express a time decay. They are useful to make a difference between deterministic and stochastic scales. 

\begin{theorem}[Stochastic averaging] Take $B$ a brownian motion and $\lb >0$. 
\\
For $g=0$
$$
\E \| e^{-\lb t} \rho_\psi\|^2_{L^2\big(R^+; \dot H^{1/2}(\R^d) \big)} \leq \f{C({\rm supp} \; \psi )}{\lb^{1/2}} \; \|Ê\psiÊf^0 \|^2_{L^2(\R^d\times \R^d)} ,
$$
$$
\E \| \rho_\psi\|^2_{L^2\big(R^+;  \dot H^{1/2}(\R^d) \big)} \leq C({\rm supp} \; \psi ) \; \|Ê\psi Êf^0 \|_{L^2(\R^d\times \R^d)} \|Ê\psi Êf \|_{L^2(  \R^+\times\R^d\times \R^d )} .
$$
For $f^0=0$
$$
\E \|e^{-\lb t} \rho_\psi\|^2_{L^2\big(R^+;  \dot H^{1/2}(\R^d) \big)} \leq  \f{C({\rm supp} \; \psi )}{\lb^{3/2}} \; \|Êe^{-\lb t} \psi g  \|_{L^2(\R^+ \times \R^d\times \R^d)} ,
$$
$$
\E  \| \rho_\psi\|^2_{L^2\big(R^+; \dot  H^{1/2}(\R^d) \big)} \leq  C({\rm supp} \; \psi ) \; \|Ê\psi g \|^{1/2}_{L^2(\R^+ \times \R^d\times \R^d)} \|Ê\psi f \|^{3/2}_{L^2(\R^+ \times \R^d\times \R^d)} .
$$
For $f^0=0$ and $g = \dv \; h$, we have 
$$
\E  \| \rho_\psi\|^2_{L^2\big(R^+;  \dot H^{1/3}(\R^d) \big)} \leq C(\psi ) \; \left[ \|Ê h\|^2_{\dot H_x^{-2/3}\cap L^2(\R^+ \times \R^d\times \R^d)} + \| \psi f \|^2_{L^2(\R^+ \times \R^d\times \R^d)}  \right] .
$$
\label{th:aver} 
\end{theorem}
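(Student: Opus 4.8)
The plan is to combine the Stratonovich representation along characteristics with the Fourier transform in $x$ and the elementary Gaussian identities for the Brownian phase. Since $f^0=0$ and $g=\dv h$, the method of characteristics gives $\wh f(k,\xi,t)=i\int_0^t e^{-ik\cdot\xi(B(t)-B(s))}\big(k\cdot\wh h(k,\xi,s)\big)\,ds$, hence $\wh\rho_\psi(k,t)=i\int_0^t\!\int \psi(\xi)\big(k\cdot\wh h(k,\xi,s)\big)e^{-ik\cdot\xi(B(t)-B(s))}\,d\xi\,ds$, and by Plancherel $\E\|\rho_\psi\|^2_{L^2(\R^+;\dot H^{1/3})}=\int_{\R^d}|k|^{2/3}\,\E\int_0^\infty|\wh\rho_\psi(k,t)|^2\,dt\,dk$. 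The structural input replacing the deterministic oscillatory kernel is $\E[e^{-ik\cdot\xi(B(t)-B(s))}]=e^{-\frac12(k\cdot\xi)^2|t-s|}$ and, for $s\le s'$, $\E\big[e^{-ik\cdot\xi(B(t)-B(s))}\,e^{ik\cdot\xi'(B(t)-B(s'))}\big]=e^{-\frac12(k\cdot(\xi-\xi'))^2(t-s')-\frac12(k\cdot\xi)^2(s'-s)}$: a Gaussian damping in $|t-s|$ at rate $(k\cdot\xi)^2$ in place of oscillation. This change of time scale is what produces the exponent $1/3$.

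For $|k|\le1$ the crude bound $|\wh\rho_\psi(k,t)|^2\le C(\psi)\int_{\mathrm{supp}\,\psi}|\wh f(k,\xi,t)|^2\,d\xi$ already gives a contribution $\lesssim C(\psi)\|\psi f\|^2_{L^2}$. For $|k|>1$ I would introduce a frequency-dependent age $T=T(k)$ and resonance level $\delta=\delta(k)$ and write $\wh\rho_\psi=\wh\rho_\psi^{\mathrm{rec}}+\wh\rho_\psi^{\mathrm{aged}}$, where $\wh\rho_\psi^{\mathrm{rec}}$ keeps only source times $s\in[(t-T)^+,t]$ and $\wh\rho_\psi^{\mathrm{aged}}=\int\psi(\xi)\,e^{-ik\cdot\xi(B(t)-B((t-T)^+))}\,\wh f(k,\xi,(t-T)^+)\,d\xi$ is the solution at the earlier time freely transported (using $f^0=0$), splitting moreover the recent part according to whether $|k\cdot\xi|<\delta$ or $|k\cdot\xi|\ge\delta$.

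The three pieces are then estimated as follows. The non-resonant recent part is controlled by $h$: expanding $\E\int_0^\infty|\cdot|^2\,dt$ as a fourfold integral, restricting source times to a window of length $T$ caps $\int e^{-\frac12(k\cdot(\xi-\xi'))^2(\cdot)}$ by $\min\{T,\,2(k\cdot(\xi-\xi'))^{-2}\}$ while $|k\cdot\xi|\ge\delta$ caps the second Gaussian by $2\delta^{-2}$, and a Schur test in $(\xi,\xi')$ on the Gaussian kernel gives a bound of the schematic shape $C(\psi)\,|k|\,\delta^{-2}\sqrt T\,\|h(k,\cdot,\cdot)\|^2_{L^2_{s,\xi}}$. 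The resonant recent part is rewritten, by Duhamel, as $\int_{|k\cdot\xi|<\delta}\psi(\xi)\big[\wh f(k,\xi,t)-e^{-ik\cdot\xi(\cdot)}\wh f(k,\xi,(t-T)^+)\big]\,d\xi$ and estimated by $f$ alone: the resonant slab has $\xi$-measure $\lesssim C(\psi)\delta/|k|$, so Cauchy--Schwarz yields $\lesssim C(\psi)(\delta/|k|)\,\|\psi f\|^2_{L^2}$. The aged part is estimated by $f$: $B(t)-B(t-T)\sim\mathcal N(0,T)$ is independent of the $\mathcal F_{t-T}$-measurable factor $\wh f(k,\cdot,t-T)$, so after conditioning a Schur test on the averaging kernel $\int e^{-\frac12(k\cdot(\xi-\xi'))^2T}|\psi(\xi')|\,d\xi'\lesssim C(\psi)\min\{1,(|k|\sqrt T)^{-1}\}$ gives $\lesssim C(\psi)(|k|\sqrt T)^{-1}\,\|\psi f\|^2_{L^2}$.

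Multiplying by $|k|^{2/3}$, integrating in $k$, and choosing $\delta(k),T(k)$ as powers of $|k|$, one arranges that every $\|\psi f\|^2_{L^2}$-term carries a weight $\lesssim1$ and every $\|h\|^2$-term a weight $\lesssim1$ on $|k|>1$ and $\lesssim|k|^{-4/3}$ on $|k|\le1$ (the latter absorbed by $\|h\|^2_{\dot H^{-2/3}_x}$), the exponent $1/3$ being precisely the one compatible with all these inequalities. I expect the main obstacle to be this balancing: a single age-cut does not close, since the non-resonant recent term wants $T$ small and the transported aged term wants $T$ large, so the genuine argument should dyadically decompose the age (summing a geometric series dominated by the coarsest scale) and couple this to the resonant-slab cut, the weaker $\dot H^{-2/3}_x$-control of $h$ (weaker than $L^2$ only at low frequencies) being what makes the matching work. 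This difficulty is intrinsic: the estimate must involve $\|\psi f\|_{L^2(\R^+\times\R^d\times\R^d)}$ and not only the data, because $\E|\wh\rho_\psi(k,t)|^2$ decays merely like $|k|^{-1}t^{-1/2}$ along the resonant directions $k\cdot\xi=k\cdot\xi'$, so the long-time tail cannot be controlled by $h$ and must be handled through $f$.
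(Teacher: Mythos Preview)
Your proposal treats only the last statement (the case $g=\dv\,h$) and, more seriously, misreads the divergence: in this theorem $\dv=\dv_\xi$, not $\dv_x$. This is not a cosmetic point. With $\dv_x$ one has $\wh g=ik\cdot\wh h$ and your representation $\wh\rho_\psi=i\int_0^t\!\int\psi\,(k\cdot\wh h)\,e^{-ik\cdot\xi(B(t)-B(s))}\,d\xi\,ds$ is the right one, but the balance you sketch then closes only at $\dot H^{1/4}$: writing $\delta=|k|^a$, $\sqrt T=|k|^{-b/2}$, your three weights after multiplication by $|k|^{2s}$ are $|k|^{2s+1-2a-b/2}$, $|k|^{2s+a-1}$, $|k|^{2s-1+b/2}$, and these are simultaneously bounded for large $|k|$ only when $8s\le 2$. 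So the failure you flag (``a single age-cut does not close'' at $s=1/3$) is not a technical artifact to be repaired by a dyadic refinement---it signals that you are proving the wrong inequality. The exponent $1/3$ is specific to the $\xi$-divergence.

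For $g=\dv_\xi h$ the paper integrates by parts in $\xi$ against the phase $e^{ik\cdot\xi(B(t-s)-B(t))}$, producing the extra factor $k\,(B(t-s)-B(t))$; after taking expectations the resulting kernel $s_1\big(1-s_1|k\cdot(\xi_1-\xi_2)|^2\big)\,e^{-s_1|k\cdot(\xi_1-\xi_2)|^2/2}$ integrates in $t$ to $\lesssim\big(4\lambda+|k\cdot(\xi_1-\xi_2)|^2\big)^{-2}$, one full power better than in the undifferentiated case, and it is this gain that shifts the optimum from $1/4$ to $1/3$. Methodologically the paper does not use a recent/aged time cut or a resonant slab at all: it adds and subtracts $\lambda\wh f$ in the equation, obtains for each fixed $k$ a bound of the form
\[
\E\int_0^\infty|\wh\rho_\psi(k,t)|^2\,dt \;\le\; C\,\f{|k|}{\lambda^{5/2}}\,\|\psi\wh h(k,\cdot)\|^2_{L^2_{t,\xi}} \;+\; C\,\f{\lambda^{1/2}}{|k|}\,\|\psi\wh f(k,\cdot)\|^2_{L^2_{t,\xi}},
\]
and chooses $\lambda=|k|^{2/3}$ to equalize the two terms at $C|k|^{-2/3}$. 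The commutator term $-\wh h\cdot\nabla_\xi\psi$ coming from $\psi\,\dv_\xi\wh h=\dv_\xi(\psi\wh h)-\wh h\cdot\nabla_\xi\psi$ is handled by the earlier (second-class) estimate and is what produces the $\dot H^{-2/3}_x$ norm on $h$. No dyadic age decomposition is needed, and the only Young/Schur step is one-dimensional, along $k/|k|$.
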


Even though some exponents may seem similar, all the scales differ from the deterministic case $B(t)=t$. Indeed we recall the 
\begin{theorem}[Deterministic averaging] Take $B(t)=t$ and $\lb \geq 0$. 
\\
(i) \, For $g=0$ 
$$
 \| e^{-\lb t} \rho_\psi\|^2_{L^2\big(R^+;  \dot H^{1/2}(\R^d) \big)} \leq C({\rm supp} \; \psi ) \; \|ÊÊf^0 \|^2_{L^2(\R^d\times \R^d)} .
$$
(ii) For $f^0=0$
$$
  \| \rho_\psi\|^2_{L^2\big(R^+;  \dot H^{1/2}(\R^d) \big)} \leq  C({\rm supp} \; \psi ) \; \|Ê\psi g \|^{1/2}_{L^2(\R^+ \times \R^d\times \R^d)} \|\psi Êf \|^{3/2}_{L^2(\R^+ \times \R^d\times \R^d)} .
$$
(iii) For $f^0=0$ and $g = \dv h$, we have 
$$
 \| \rho_\psi\|^2_{L^2\big(R^+;  \dot H^{1/4}(\R^d) \big)} \leq C(\psi) \;  \left[ \|Ê h\|^2_{\dot H_x^{-1/2}\cap L^2(\R^+ \times \R^d\times \R^d)} + \| \psi f \|^2_{L^2(\R^+ \times \R^d\times \R^d)}  \right] .
$$
\label{th:averd} 
\end{theorem}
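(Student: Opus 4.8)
I plan to reduce all four estimates to frequency-localised bounds by taking the Fourier transform in $x$ (dual variable $k$) together with, for the source terms, a Fourier--Laplace transform in $t$ that absorbs the damping $e^{-\lb t}$. Writing $\wh f(k,\xi,t)$ for the $x$-Fourier transform and $\wt f(k,\xi,\tau)=\int_0^\infty e^{-(\lb+i\tau)t}\wh f\,dt$, the Duhamel representation along the characteristics $x+t\xi$ gives $\wt f=\wt g/(\lb+i\sigma)$ when $f^0=0$, where $\sigma:=\tau+k\cdot\xi$ is the resonance variable. By Plancherel in $t$, each left-hand side becomes $\tfrac1{2\pi}\int_\R\int_{\R^d}|k|^{2s}|\wt\rho_\psi(k,\tau)|^2\,dk\,d\tau$ with $s=\tfrac12$ for (i),(ii) and $s=\tfrac14$ for (iii), and $\wt\rho_\psi=\int\psi(\xi)\wt f\,d\xi$. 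Everything then hinges on how the compactly supported weight $\psi$ interacts with the kernel $1/(\lb+i\sigma)$.

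For part (i) with $g=0$ one sees the estimate directly, without the Laplace transform: $\wh\rho_\psi(k,t)=\wh{F_k}(tk)$ with $F_k:=\psi\,\wh{f^0}(k,\cdot)$. The change of variables $r=t|k|$ turns $|k|\int_0^\infty e^{-2\lb t}|\wh\rho_\psi|^2\,dt$ into $\int_0^\infty e^{-2\lb r/|k|}|\wh{F_k}(r\,k/|k|)|^2\,dr$, which I bound by the full line integral $\int_\R|\wh{F_k}(r\,k/|k|)|^2\,dr$ and evaluate by the Fourier slice theorem: it equals $2\pi\int_\R|F_k^\omega|^2$, where $F_k^\omega$ is the projection of $F_k$ onto the $k/|k|$-axis. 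Since $\mathrm{supp}\,\psi$ is compact, Cauchy--Schwarz over the bounded transverse slices gives $\le C(\mathrm{supp}\,\psi)\|F_k\|_{L^2_\xi}^2$, and integrating in $k$ produces exactly the factor $|k|$, hence $\dot H^{1/2}$. The damping contributes only a factor $\le 1$, which is why — in contrast with the Brownian case — no power of $\lb$ is gained here.

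For parts (ii) and (iii) with $f^0=0$ I use the resonance truncation method, splitting the $\xi$-integral at $|\sigma|\lessgtr\da$ for a parameter $\da=\da(k,\tau)$ to be optimised. On the resonant set $\{|\sigma|\le\da\}$, a slab of width $\sim\da/|k|$ in the $k/|k|$-direction, I keep $\wt f$ itself; Cauchy--Schwarz gives a contribution $\le C(\da/|k|)\int|\psi|\,|\wt f|^2\,d\xi$, which after integration yields $\|\psi f\|_{L^2}$ and is the reason $f$ appears on the right: the singular region of $1/(\lb+i\sigma)$ is never inverted, so no $\lb>0$ is needed. On the non-resonant set I invert the resolvent; the measure estimate $\int_{|\sigma|>\da}\sigma^{-2}\,d\sigma\sim\da^{-1}$ handles case (ii) directly and, after optimising $\da$ and applying Cauchy--Schwarz in $(k,\tau)$, gives $\dot H^{1/2}$ with the stated norms. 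In case (iii), interpreting $\dv h$ as the velocity divergence $\dv_\xi h$ (as is natural in the kinetic formulation and as is forced by the appearance of a negative $x$-Sobolev norm), I integrate by parts in $\xi$ on the non-resonant set; this moves the derivative off $h$ and produces a single-resolvent term carrying $\nabla_\xi\psi\cdot h$, with measure estimate $\int_{|\sigma|>\da}\sigma^{-2}\sim\da^{-1}$, and a double-resolvent term carrying $|k|\,\psi\,h$, with measure estimate $\int_{|\sigma|>\da}\sigma^{-4}\sim\da^{-3}$.

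The main obstacle is this double-resolvent term in (iii): the extra factor $|k|$ from the full velocity derivative is precisely what degrades the gain from $\dot H^{1/2}$ to $\dot H^{1/4}$, and it must be balanced against the resonant term through the optimal choice $\da\sim|k|^{1/2}(\,\cdot\,)^{1/4}$. Carrying this out and separating frequency regimes, the two norms on $h$ emerge naturally: the double-resolvent term dominates at high $|k|$ and produces $\|h\|_{L^2}$, while the single-resolvent term dominates at low $|k|$, where its $|k|^{-1}$ weight is exactly $\|h\|^2_{\dot H^{-1/2}_x}$; together they give the intersection norm, and Young's inequality converts the resulting products into the stated sum with $\|\psi f\|_{L^2}^2$. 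I expect the genuinely delicate points to be the boundary contributions generated by the integration by parts at $|\sigma|=\da$ (to be estimated at the order of the single-resolvent term) and the careful bookkeeping of the powers of $|k|$ which, under the deterministic characteristics $x+t\xi$, combine to give exactly $1/4$ and $-1/2$ — whereas the Brownian scaling $x+B(t)\xi$ would instead produce $1/3$ and $-2/3$.
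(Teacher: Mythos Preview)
Your approach is correct and takes a genuinely different route from the paper's. The paper proves only part~(i) in detail (Appendix~A) and does so \emph{without} any time Fourier transform: it expands $\int_0^\infty e^{-2\lb t}|\wh\rho_\psi(k,t)|^2\,dt$ as a double integral in $(\xi_1,\xi_2)$, obtains the convolution kernel $1/\bigl(2\lb-i(\xi_1-\xi_2)\cdot k\bigr)$, and then needs a Calder\'on--Zygmund argument (the kernel is not in $L^1_\xi$, but its Fourier transform is bounded by $1/|k|$). Your Fourier-slice argument for~(i) --- the substitution $r=t|k|$ followed by one-dimensional Plancherel along $k/|k|$ and Cauchy--Schwarz across the transverse slab --- is more elementary and bypasses that operator-theoretic step entirely. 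For~(ii) and~(iii) you use the classical space--time Fourier/Laplace transform and resonance truncation at $|\tau+k\cdot\xi|\sim\da$; this is precisely the method the paper \emph{chose not to use}, because a time-dependent symbol $\dot B(t)\,k\cdot\xi$ makes time Fourier unusable in the Brownian case, and the paper's purpose was to develop a single technique (the $\lb$-damping plus $(\xi_1,\xi_2)$ expansion) that handles deterministic and stochastic transport uniformly. What your approach buys is simplicity and familiarity; what the paper's buys is portability to random $B$.

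One caveat on~(ii): the symmetric truncation you describe, optimised in $\da$, naturally yields $\|\rho_\psi\|^2_{L^2(\dot H^{1/2})}\le C\,\|\psi g\|_{L^2}\,\|\psi f\|_{L^2}$, not the stated $\|\psi g\|^{1/2}\|\psi f\|^{3/2}$; the two right-hand sides are both legitimate averaging bounds but neither implies the other. The asymmetric exponents in the paper's statement come from the $\lb$-damping device (balancing a $\lb^{-a}\|g\|^2$ term against a $\lb^{b}\|f\|^2$ term and choosing $\lb$ optimally), mirroring the stochastic form for comparison. So your claim that the truncation delivers ``the stated norms'' is slightly off; if you want those exact exponents you should swap the truncation for the damping trick in that step.
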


The first two classes of formulas are not the most traditional and express in particular time decay, which is usually not the aim of averaging lemmas which aim at compactness. This is because other methods based on dispersion are more adapted to prove time decay as those in \cite{BaD,Gl,BP_bams}.

One can find related $L^p$ formulas; to do so it is enough to  interpolate them with the other two  elementary pathwise inequalities in $L^1$ and $L^\infty$
$$
\int_{\R^d} | \rho_\psi (x,t) |dx \leq \int_{\R^{2d}} | f^0(x,\xi)| dx d\xi,  \qquad \|  \rho_\psi (t) \|_{L^\infty(\R^d)} \leq \| \psi \|_{L^1(\R^{d})}\;  \| f^0 \|_{L^\infty(\R^{2d})}. 
$$

The usual proof of averaging lemmas, as initiated in  \cite{GLPS}, uses the Fourier transform in space and time. It is not very convenient because the transport term depends on time here. Therefore we prefer a method without time Fourier transform. Such a method has been developed in \cite{bd} but it uses the Fourier transform in $\xi$. We prefer to still develop another variant that is particularly fit to the stochastic case. 
\\

\begin{proof}
We use the Fourier transform  $\wh{f}(k,\xi,t)$ of $f$ in the variable $x$. The equation on the Fourier transform of $f$ becomes
\begin{equation}\label{skineticf}
\f{\p}{\p t}\wh{f}(k,\xi,t) + i\dot B(t)  k. \xi   \wh{f} = \wh{g}.
\end{equation}

As usual we add and subtract a damping term with $\lb >0$ (except for the first result where we do not need the extra-term in the right hand side because the damping term gives it naturally)
$$
\f{\p}{\p t}\wh{f}(k,\xi,t) +i \dot B(t)  \circ k.\xi   \wh{f} + \lb  \wh{f} = \wh{g} +  \lb  \wh{f}.
$$
Because we use the Stratonovich rule, the solution is given by the representation formula
$$
\wh{f}(k,\xi,t) =\wh{f^0}(k,\xi) e^{-\lb t-iB(t) k.\xi}
 + \int_0^t e^{-\lb s}  \big[\wh{g} +  \lb  \wh{f}] (k, \xi, t-s)   \big]e^{i k.\xi  \big(B(t-s) -B(t) \big)} ds 
$$
and thus
\beq 
\wh{\rho_\psi}(k,t) =  \int \psi \wh{f^0}(k,\xi) e^{-\lb t-iB(t) k.\xi} d \xi + \int_0^t \int e^{-\lb s}  \big[\psi \wh{g} +  \lb \psi \wh{f}] e^{i k.\xi  \big(B(t-s) -B(t) \big)}   d\xi ds.
\label{general_rho}
\eeq
We obtain the general inequality that we will however particularize later
$$
\f 12 |\wh{\rho_\psi}(k,t) |^2 \leq  \left|  \int \psi \wh{f^0}(k,\xi) e^{-\lb t-iB(t) k.\xi} d \xi \right|^2 +  \left| \int_0^t \int e^{-\lb s}  \big[\psi \wh{g} +  \lb \psi \wh{f}](k,\xi,t-s) e^{i k.\xi  \big(B(t-s) -B(t) \big)} ds  d\xi \right|^2 .
$$

\noindent {\bf First class of results; $g=0$.} For the first result, we use formula  \eqref{general_rho} with $\lb=0$ and write 
\begin{align*}
 \E \int_{t=0}^\infty & |e^{-\lb t}\wh{\rho_\psi}(k,t) |^2 =  \E \int_{t=0}^\infty \left|  \int \psi \wh{f^0}(k,\xi) e^{-\lb t-iB(t) k.\xi} d \xi \right|^2 dt 
\\[2mm]
&=  \E \int_{t=0}^\infty \left[\int \psi \wh{f^0}(k,\xi_1) e^{-\lb t-iB(t) k.\xi_1} d \xi_1 \;\overline{ \int \psi \wh{f^0}(k,\xi_2) e^{-\lb t-iB(t) k.\xi_2} d \xi_2 } \; \right] dt
\\[2mm]
&= \E   \int_{t=0}^\infty \int \int \psi \wh{f^0}(k,\xi_1) \;\overline{ \psi \wh{f^0}(k,\xi_2) }e^{-2\lb t-iB(t) k.(\xi_1-\xi_2)} d \xi_2 d \xi_1 dt
\\[2mm]
&=  \int_{t=0}^\infty \int \int \psi \wh{f^0}(k,\xi_1) \; \overline{ \psi \wh{f^0}(k,\xi_2) } \int_\R e^{-2\lb t-i w k.(\xi_1-\xi_2)} e^{-\f{w^2}{2t}} \f{dw}{\sqrt{ 2\pi t}} d \xi_2 d \xi_1 dt
\\[2mm]
&=\int_{t=0}^\infty \int \int \psi \wh{f^0}(k,\xi_1) \; \overline{\psi \wh{f^0}(k,\xi_2) } e^{-2\lb t} \exp\big(-\f{t \big|k.(\xi_1-\xi_2)\big|^2}{2} \big)  d \xi_2 d \xi_1 dt
\\[2mm]
&= 2 \int  \int \psi \wh{f^0}(k,\xi_1) \;\overline{ \psi \wh{f^0}(k,\xi_2) } \; \f{1}{4\lb+ |k.(\xi_1-\xi_2)|^2}  d \xi_2 d \xi_1
\\[2mm]
&\leq \f{ C}{\sqrt{\lb}Ê|k|} \int \int |\psi \wh{f^0}(k,\xi) |^2 d \xi
\end{align*}
The last line is not a direct Cauchy-Schwarz inequality. It uses the usual observation leading to averaging lemmas. It can be proved in choosing an orthonormal basis for the $\xi$ space which first vector is $k/|k|$. In the orthogonal hyperplane the Cauchy-Schwarz inequality is enough and thus the constant $C({\rm supp} \; \psi  )$. In the first direction one uses the Young inequality $\|u*v\|_2 \leq C \| u\|_2 \|v\|_1$ and reduces the inequality to compute (with $k'= k/\lb$)
$$
\int_R^R \f{1}{4\lb+ |k|^2  |\eta|^2}  d \eta =\f 1 \lb  \int_R^R \f{1}{4+ |k'|^2  |\eta|^2}  d \eta \leq  \f{ C}{\sqrt{\lb}Ê(\sqrt{\lb}Ê + |k| )} \leq  \f{ C}{\sqrt{\lb}Ê|k|}.
$$
The first result for $g=0$ is proved.
\\

For the second result with $g=0$, we use  \eqref{general_rho}  and estimate the lefthand side term $\lb \wh f$ as in the proof of the  second class of results below. We borrow from there the estimate for the second term which gives
$$
 \E \int_{t=0}^\infty  Ê|k|\;  | \wh{\rho_\psi}(k,t) |^2 \leq  \f{ C}{\sqrt{\lb}} \int \int |\psi \wh{f^0}(k,\xi) |^2 d \xi + \f{ C\lb }{ \sqrt{\lb}} \int |\psi  \wh{f}(k,\xi, t) |^2 dk \; d \xi \; dt .
$$
We choose $\lb=\|g\|_2/\|f\|_2$ and obtain the announced result.
\\
\\
\noindent {\bf Second  class of result; $f^0=0$.} The integral term for $g$ is more technical due to an additional time integration, but the calculation follows the same ideas. For the result with $\lb$, we consider only the  term containing $g$ in \eqref{general_rho} with $\lb=0$,
\begin{align*}
\E \int_{t=0}^\infty & |e^{-\lb t}\wh{\rho_\psi}(k,t) |^2 
= \E \int_{t=0}^\infty  \Big| \int_0^t \int e^{-\lb t} \psi \wh{g}(k,\xi,t-s)  e^{i k.\xi  \big(B(t-s) -B(t) \big)}  d\xi  ds \Big|^2 dt 
\\[2mm]
&= \E   \int_{t=0}^\infty \int  \int_{s_1,s_2=0}^t    e^{-2\lb t} \psi \wh{g}(k,\xi_1,t-s_1)  e^{i k.\xi_1  \big(B(t-s_1) -B(t) \big)} 
\\[2mm]
&\qquad \qquad \qquad   \overline{ \psi \wh{g}(k,\xi_2,t-s_2)  e^{i k.\xi_2  \big(B(t-s_2) -B(t) \big)} } ds_1ds_2 d \xi_2 d \xi_1 dt.
\end{align*}
To continue we may restrict ourselves to $0<s_1<s_2<t$ and compute
\begin{align*}
&\quad \E e^{i k.\xi_1  \big(B(t-s_1) -B(t) \big)}  e^{-i k.\xi_2  \big(B(t-s_2) -B(t) \big)}   
\\
&= \E e^{i k.\xi_1  \big(B(t-s_1) -B(t) \big)}  e^{-i k.\xi_2  \big(B(t-s_2) -B(t-s_1)\big)}   e^{-i k.\xi_2  \big(B(t-s_1) -B(t) \big)}   
\\[1mm]
&= \int e^{i k.(\xi_2-\xi_1) w} e^{-\f{w^2}{2s_1}} \f{dw}{\sqrt{ 2\pi s_1}} \;  \int e^{i k.\xi_2 w} e^{-\f{w^2}{2(s_2-s_1)}} \f{dw}{\sqrt{ 2\pi (s_2-s_1)}} 
\\[1mm]
& = \exp\left({-\f{s_1 |k.(\xi_1-\xi_2)|^2}{2}}\right)  \; \exp\left({-\f{(s_2-s_1) |k.\xi_2|^2}{2}}  \right) . 
\end{align*}
At this stage it is easier to change variables and set $\tau_i =t-s_i$, $t>\tau_1>\tau_2>0$. Then,  combining the two ingredients above, we obtain the expression for the term of interest
\begin{align*}
&\qquad \qquad \f 12  \E \int_{t=0}^\infty |e^{-\lb t}\wh{\rho_\psi}(k,t) |^2  =
\\[2mm] 
&=  \int_{t=0}^\infty \int  \int_{\tau_1>\tau_2=0}^t e^{- 2\lb t} \psi \wh{g}(k,\xi_1,\tau_1)  \overline{ \psi \wh{g}(k,\xi_2,\tau_2) } e^{-\f{(t-\tau_1) |k.(\xi_1-\xi_2)|^2}{2}}  \; e^{-\f{(\tau_1-\tau_2) |k.\xi_2|^2}{2}}  
 d\tau_1d\tau_2 d \xi_2 d \xi_1 dt
\\[2mm]
&=  \int_{\tau_1>\tau_2=0}^\infty \int  \int_{t=\tau_1}^\infty e^{- 2\lb t} \psi \wh{g}(k,\xi_1,\tau_1)  \overline{ \psi \wh{g}(k,\xi_2,\tau_2) }
e^{-\f{(t-\tau_1) |k.(\xi_1-\xi_2)|^2}{2}}  \; e^{-\f{(\tau_1-\tau_2) |k.\xi_2|^2}{2}}   d\tau_1d\tau_2 d \xi_2 d \xi_1 dt
\\[2mm]
&= \int_{\tau_1>\tau_2=0}^\infty \int e^{- 2\lb \tau_1} \psi \wh{g}(k,\xi_1,\tau_1)|Ê\; \overline{\psi \wh{g}(k,\xi_2,\tau_2)} \f{2}{2 \lb+|k.(\xi_1-\xi_2)|^2 }
\; e^{-\f{(\tau_1-\tau_2) |k.\xi_2|^2}{2}}  d\tau_1d\tau_2 d \xi_2 d \xi_1 
\\[2mm]
&= \int_{\tau_1>\tau_2=0}^\infty \int e^{- \lb \tau_1} e^{- \lb \tau_2}  \psi \wh{g}(k,\xi_1,\tau_1)|Ê\; \overline{\psi \wh{g}(k,\xi_2,\tau_2)} \f{2}{2 \lb+|k.(\xi_1-\xi_2)|^2 }
\; e^{-\f{(\tau_1-\tau_2)(2\lb+ |k.\xi_2|^2)}{2}}  d\tau_1d\tau_2 d \xi_2 d \xi_1 .
\end{align*}
Next, we treat the time convolution using the Young inequality $\int u_1(\tau_1) \; u_2*K(\tau_1)  d\tau_1 \leq \|u_1\|_2 \|u_2\|_2 \| K\|_1$ with the convolution $K$ given by the truncated exponential 
\begin{align*}
&\qquad \qquad  \f 12 \E \int_{t=0}^\infty |e^{-\lb t}\wh{\rho_\psi}(k,t) |^2
\\[2mm]
& \leq \int \left( \int |e^{- \lb \tau_1} \psi \wh{g}(k,\xi_1,\tau_1)|^2 d\tau_1\; \int |e^{- \lb \tau_2}\psi \wh{g}(k,\xi_2,\tau_2)|^2 d\tau_2\right)^{1/2}Ê
\f{2}{2 \lb+|k.(\xi_1-\xi_2)|^2 }\;  \f{2}{\lb+|k.\xi_2|^2 } d \xi_2 d \xi_1 
\\[2mm]
& \leq \f 1 \lb  \int \left( \int |e^{- \lb \tau_1}\psi \wh{g}(k,\xi_1,\tau_1)|^2 d\tau_1 \right)^{1/2}Ê\left( \int |e^{- \lb \tau_2} \psi \wh{g}(k,\xi_2,\tau_2)|^2 d\tau_2\right)^{1/2}Ê
\f{4}{2 \lb+|k.(\xi_1-\xi_2)|^2 }\; d \xi_2 d \xi_1 
\\[2mm]
& \leq \f {C({\rm supp}\;  \psi) }{ \lb^{3/2}Ê|k|} \int |e^{- \lb \tau}\psi \wh{g}(k,\xi,\tau)|^2 d\tau d\xi. 
\end{align*}
As before, the last inequality follows from the Young inequality used in $\eta$, the one dimensional component of $\xi_1-\xi_2$ parallel to $k$,  with
$$
\int \f{d\eta}{2 \lb+|k|^2 \eta^2 } =  \f{ C}{\sqrt{\lb}Ê|k|} .
$$
This proves the first inequality with $f^0 \equiv 0$. 
\\

For the second inequality, we use again \eqref{general_rho} and begin with the term containing $g$ in the right hand side.
\begin{align*}
 {\mathcal I}:= \E \int_{t=0}^\infty  \Big| \int_0^t \int &e^{-\lb s} \psi \wh{g}(k,\xi,t-s)  e^{i k.\xi  \big(B(t-s) -B(t) \big)}  d\xi  ds \Big|^2 dt =
\\[2mm]
&= \E   \int_{t=0}^\infty \int  \int_{s_1,s_2=0}^t    e^{-\lb (s_1+s_2)} \psi \wh{g}(k,\xi_1,t-s_1)  e^{i k.\xi_1  \big(B(t-s_1) -B(t) \big)} 
\\[2mm]
&\qquad \qquad \qquad   \overline{ \psi \wh{g}(k,\xi_2,t-s_2)  e^{i k.\xi_2  \big(B(t-s_2) -B(t) \big)} } ds_1ds_2 d \xi_2 d \xi_1 dt.
\end{align*}

To continue we may again restrict ourselves to $0<s_1<s_2<t$ and use the calculation above to compute the expectations. With the variables  $\tau_i =t-s_i$, $t>\tau_1>\tau_2>0$ we obtain
\begin{align*}
&\qquad \qquad \qquad  {\mathcal I}/{2} =
\\[2mm] 
&=  \int_{t=0}^\infty \int  \int_{\tau_1>\tau_2=0}^t e^{-\lb (2t-\tau_1- \tau_2)} \psi \wh{g}(k,\xi_1,\tau_1)  \overline{ \psi \wh{g}(k,\xi_2,\tau_2) } e^{-\f{(t-\tau_1) |k.(\xi_1-\xi_2)|^2}{2}}  \; e^{-\f{(\tau_1-\tau_2) |k.\xi_2|^2}{2}}  
 d\tau_1d\tau_2 d \xi_2 d \xi_1 dt
\\[2mm]
&=  \int_{\tau_1>\tau_2=0}^\infty \int  \int_{t=\tau_1}^\infty e^{-\lb (\tau_1- \tau_2)} \psi \wh{g}(k,\xi_1,\tau_1)  \overline{ \psi \wh{g}(k,\xi_2,\tau_2) }
e^{-\f{(t-\tau_1)[4\lb+ |k.(\xi_1-\xi_2)|^2]}{2}}  \; e^{-\f{(\tau_1-\tau_2) |k.\xi_2|^2}{2}}   d\tau_1d\tau_2 d \xi_2 d \xi_1 dt
\\[2mm]
&= \int_{\tau_1>\tau_2=0}^\infty \int \psi \wh{g}(k,\xi_1,\tau_1)Ê\; \overline{\psi \wh{g}(k,\xi_2,\tau_2)}  \f{2}{4 \lb+|k.(\xi_1-\xi_2)|^2 }
\; e^{-\f{(\tau_1-\tau_2)(\lb+ |k.\xi_2|^2)}{2}}  d\tau_1d\tau_2 d \xi_2 d \xi_1 
\\[2mm]
& \leq \int \left( \int |\psi \wh{g}(k,\xi_1,\tau_1)|^2 d\tau_1\right)^{1/2}Ê\left( \int |\psi \wh{g}(k,\xi_2,\tau_2)|^2 d\tau_2\right)^{1/2}Ê
\f{2}{4 \lb+|k.(\xi_1-\xi_2)|^2 }\;  \f{2}{2 \lb+|k.\xi_2|^2 } d \xi_2 d \xi_1 .
\end{align*}
Again this follows from the Young inequality $\int u_1(\tau_1) \; u_2*K(\tau_1)  d\tau_1 \leq \|u_1\|_2 \|u_2\|_2 \| K\|_1$ with the convolution $K$ given by the truncated gaussian. Then, we conclude because 
\begin{align*}
 {\mathcal I}/{2}  
& \leq \f{2}{\lb} \int \left( \int |\psi \wh{g}(k,\xi_1,\tau_1)|^2 d\tau_1\right)^{1/2}Ê\left( \int |\psi \wh{g}(k,\xi_2,\tau_2)|^2 d\tau_2\right)^{1/2}Ê
\f{1}{4 \lb+|k.(\xi_1-\xi_2)|^2 }\;   d \xi_2 d \xi_1 
\\[2mm]
&\leq  \f{ C}{\lb \sqrt{\lb}Ê|k|} \int |\psi \wh{g}(k,\xi, t) |^2 d \xi \; dt .
\end{align*}
As before, the last inequality follows from the Young inequality used in the one dimensional component of $\xi_1-\xi_2$ parallel to $k$.

These are the usual estimates for the averaging lemma but we win an extra-factor $ \f{1}{\sqrt \lb}Ê$ compared to the deterministic case. 
It does not play a role for regularity in the second result of Theorem \ref{th:aver} because there we choose $\lb=\|g\|_2/\|f\|_2$ in the upper bound
\beq
\E \int \int_0^\infty |k| |\wh{\rho_\psi}(k,t) |^2 dt dk \leq \f{ C}{\lb \sqrt{\lb}}\int | \wh{g}(k,\xi, t) |^2 dk \;d \xi \; dt + \f{ C\lb }{ \sqrt{\lb}} \int | \wh{f}(k,\xi, t) |^2 dk \; d \xi \; dt 
\label{estimate_2}
\eeq
that is obtained after combining the two terms in $g$ and $\lb f$ in \eqref{general_rho}.
\\
\\
\noindent {\bf Third  result; $g={\rm div}\;  h$.} We go back to the formula for $\wh{\rho_\psi}(k,t) $ and write $\psi \dv_\xi \wh{ h}  =-\wh{ h}. \nabla_\xi \psi+\dv[\psi \wh{ h} ]$.  The first term in this Leibniz formula can be treated as before and we examine the second one. It gives a contribution  
\begin{align*}
  \wh{\rho_\psi}(k,t) &= \text{first term}+  \int_0^t \int e^{-\lb s}  \f{\p}{\p \xi}[\psi\wh{ h} ] e^{i k.\xi  \big(B(t-s) -B(t) \big)} ds  d\xi 
\\[2mm]
&= \text{first term} - i  \int_0^t \int e^{-\lb s} \psi\wh{ h}.  k \big(B(t-s) -B(t) \big) e^{i k.\xi  \big(B(t-s) -B(t) \big)} ds  d\xi 
\end{align*}
 Therefore, its contribution to $\E \int_0^\infty |\wh{\rho_\psi}(k,t) |^2 dt$ is 
\begin{align*}
&  \E   \int_{t=0}^\infty \int  \int_{s_1,s_2=0}^t    e^{-\lb (s_1+s_2)} \psi \wh{h}(k,\xi_1,t-s_1).k \big(B(t-s_1) -B(t) \big)  e^{i k.\xi_1  \big(B(t-s_1) -B(t) \big)}  
\\
& \hspace{4cm} { \overline{\psi \wh{h}(k,\xi_2,t-s_2)}. k \big(B(t-s_2) -B(t) \big)  e^{- i k.\xi_2  \big(B(t-s_2) -B(t) \big)} } d\xi_1 \, d\xi_2\, ds_1\, ds_2\, dt 
\end{align*} 
which we evaluate as before. The contribution for $0<s_1<s_2<t$  is the $s_i$, $\xi_i$ integral of the expectation that we evaluate as  
\begin{align*}
& \int_w \int_z  w e^{-i k.(\xi_1-\xi_2)w} (z + w)e^{ i k.\xi_2 z}   e^{-\f{w^2}{2s_1}}e^{-\f{z^2}{2(s_2-s_1)}} \f{dw}{\sqrt{ 2\pi s_1}}  \f{dz}{\sqrt{ 2\pi (s_2-s_1)}}
\\[2mm]
&= \int_w \int_z  w^2 e^{-i k.(\xi_1-\xi_2)w} e^{ i k.\xi_2 z}  e^{-\f{w^2}{2s_1}}e^{-\f{z^2}{2(s_2-s_1)}}  \f{dw}{\sqrt{ 2\pi s_1}} \f{dz}{\sqrt{ 2\pi (s_2-s_1)}}
\\[2mm]
&= s_1 (1- s_1 |k.(\xi_1-\xi_2)|^2)  \exp\left(-\f{s_1 |k.(\xi_1-\xi_2)|^2}{2}\right)  \; \exp\left(-\f{(s_2-s_1) |k.\xi_2|^2}{2}\right)  .
\end{align*}  

After changing the variable  $s_i$ in $\tau_i = t-s_i$, we have to evaluate
\begin{align*}
& \int_{t=0}^\infty \int  \int_{\tau_2\leq \tau_1\leq t }    e^{-\lb (2t-\tau_1- \tau_2)} \psi \wh{h}(k,\xi_1,\tau_1).k \; 
  \overline{\psi \wh{h}(k,\xi_2,\tau_2)}. k 
\\[2mm] 
& (t-\tau_1) \big[1- (t-\tau_1) |k.(\xi_1-\xi_2)|^2 \big]  \exp\left(-\f{(t-\tau_1) |k.(\xi_1-\xi_2)|^2}{2}\right)  \; \exp\left(-\f{(\tau_1-\tau_2) |k.\xi_2|^2}{2}\right)
\end{align*} 
But we notice that
\begin{align*}
&\left|\int_{t=\tau_1}^\infty (t- \tau_1) \big[1- (t- \tau_1) |k.(\xi_1-\xi_2)|^2  \big]  \exp\left( -\f{(t- \tau_1) [4\lb+|k.(\xi_1-\xi_2)|^2]}{2}\right)  dt \right|
\\[2mm]
& \leq \f{C}{\big(4\lb+|k.(\xi_1-\xi_2)|^2\big)^2}.
\end{align*}  

Finally the formula for  $\E \int_0^t |\wh{\rho_\psi}(k,t) |^2 dt$ is controlled by 
\begin{align*}
& \text{first term}+ \int \int_{0< \tau_2< \tau_1} \psi \wh{h}(k,\xi_1,\tau_1).k \; \overline{\psi \wh{h}(k,\xi_2,\tau_2)}. k \; \f{C}{\big(4\lb+|k.(\xi_1-\xi_2)|^2\big)^2}e^{-\lb(\tau_1- \tau_2)}
\\ \\
& \leq \text{first term}+ C\f{|k|}{\lb^2 \sqrt \lb} \| \psi \wh{h}(k,\xi,t-s)\|^2_{L^2_{t,\xi}}
\end{align*}  
because 
$$
\int  \f{d\eta}{\big(4\lb+|k|^2 \eta^2\big)^2} =\f{C}{ |k| \lb \sqrt \lb}, \qquad  \int_0^\infty  e^{-{\tau}{\lb}} d \tau= \f{1}{\lb}.
$$

When balancing this contribution with that of $\lb \wh{f}$ we end up with 
$$
C\left[ \f{|k|}{\lb^2 \sqrt \lb} + \f{\sqrt \lb}{|k|}  \right] \left[\| \psi \wh{h}(k,\xi,t)\|^2_{L^2_{t,\xi}}  +\| \psi \wh{f}(k,\xi,t)\|^2_{L^2_{t,\xi}}  \right]
$$
which is optimal for $\lb= |k|^{2/3}$ and gives
$$
C \f{1}{|k|^{2/3}}  \left[\| \psi \wh{h}(k,\xi,t)\|^2_{L^2_{t,\xi}}  +\| \psi \wh{f}(k,\xi,t)\|^2_{L^2_{t,\xi}}  \right].
$$
This means a regularizing term in $\dot H^{1/3} $ regularity  as announced.
\\

The contribution of the First Term, analogous to  the second class of results \eqref{estimate_2},  gives a contribution controlled by
$$
\f{C}{\lb {\sqrt \lb } \; |k| }  \; \| \nabla \psi \, \wh{h}(k,\xi,t)\|^2_{L^2_{t,\xi}} 
$$
that gives with our previous choice of $\lb = |k|^{2/3} $
$$
\f{C}{|k|^{2/3}}\;\f{1} {|k|^{4/3} } \; \| \nabla \psi  \, \wh{h}(k,\xi,t)\|^2_{L^2_{t,\xi}} .$$

These controls together give the third result.
\end{proof}

\section{Space regularity, general velocity field $a(\xi)$}
\label{sec:sal}

Our  method can be extended to the case of the kinetic equation with a more general transport field, still motivated by the case of scalar conservation laws, 
\begin{equation}\label{skineticg}
\begin{cases}
\f{\p}{\p t} f(x,\xi,t)  + \dot B(t) \circ a(\xi) . \nabla_x f = g(x, \xi,t )  \quad \text{ in } \quad \R^d \times  \R \times(0,\infty), 
\\[2mm]
f(t=0)=f^0 \quad \text{ on } \quad  \R^d \times  \R .
\end{cases}
\end{equation}
We assume that the transport field  $a(\xi) \in C^1(\R; \R^d)$ 
satisfies a natural non-degeneracy condition (in view of the deterministic averaging lemmas): there is a $\al \geq 1$ and a constant $A(\al)>0$ such that
\begin{equation}\label{as_kinetic}
\inf_{e\in \R^d, \; |e|=1} \; |e.a(\xi_1)-e.a(\xi_2)|\geq A \;  |\xi_1 - \xi_2|^\alpha, \qquad 1\leq \alpha < \infty.
\end{equation}

Because it uses pointwise control, this assumption is stronger than the usual one based on measures of  the sets $\{Ê|e.a(\xi)- \tau| \leq \e \}$ but it is in the same spirit of imposing nonlinearity of the curves $\xi \mapsto a(\xi)$ (that is this $d$-dimensional curve is not locally contained in a hyperplane) as in the deterministic case  \cite {bd,DLM,Ger,GPS,GLPS,BP_bams,PS}.

\begin{theorem}[Stochastic averaging for general velocity field] Take $B$ a brownian motion and assume \eqref{as_kinetic} in the stochastic kinetic equation \eqref{skineticg}. Take  $\lb >0$ 
\\
(i) \, For $g=0$, we have 
$$
\E \| e^{-\lb t} \rho_\psi\|^2_{L^2\big(R^+; \dot H^{1/(2\al)}(\R^d) \big)} \leq \f{C}{\lb^{1-1/2\al}} \|ÊÊ\psi  f^0 \|^2_{L^2(\R^d\times \R)} ,
$$
$$
\E \| \rho_\psi\|^2_{L^2\big(R^+; \dot H^{1/(2\al)}(\R^d) \big)} \leq C \|  \psi {f^0}\|_{L^2(\R^d\times \R)}^{1+1/2\al}  \|  \psi f \|_{L^2(\R^d\times \R\times \R^+)}^{1-1/2\al}.
$$
(ii) For $f^0=0$, we have 
$$
\E \| e^{-\lb t} \rho_\psi\|^2_{L^2\big(R^+; \dot H^{1/(2\al)}(\R^d) \big)} \leq \f{C}{\lb^{1-1/2\al}} \|Êe^{-\lb t}Ê\psi  g \|^2_{L^2(\R^d\times \R \times \R^+)} ,
$$
$$
\E \| \rho_\psi\|^2_{L^2\big(R^+; \dot H^{1/(2\al)}(\R^d) \big)} \leq C \|  \psi {g}\|_{L^2(\R^d\times \R \times \R^+)}^{1+1/2\al}  \|  \psi  f \|_{L^2(\R^d\times \R\times \R^+)}^{1-1/2\al}.
$$
\label{th:averg} 
\end{theorem}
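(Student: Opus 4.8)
The plan is to run the proof of Theorem \ref{th:aver} essentially unchanged, the single modification being that the scalar $k.\xi$ that drives the characteristics is replaced by $k.a(\xi)$. After the Fourier transform in $x$, equation \eqref{skineticg} reads $\f{\p}{\p t}\wh f+i\dot B(t)\circ\big(k.a(\xi)\big)\wh f=\wh g$; adding and subtracting the damping $\lb\wh f$ and integrating along characteristics (legitimate because the convention is Stratonovich) gives the exact analogue of \eqref{general_rho},
\[
\wh{\rho_\psi}(k,t)=\int\psi\wh{f^0}(k,\xi)\,e^{-\lb t-iB(t)k.a(\xi)}\,d\xi+\int_0^t\!\!\int e^{-\lb s}\big[\psi\wh g+\lb\psi\wh f\big](k,\xi,t-s)\,e^{ik.a(\xi)\big(B(t-s)-B(t)\big)}\,d\xi\,ds.
\]
Squaring, taking $\E$, and expanding the square in $(\xi_1,\xi_2)$ — and, for the $g$ and $\lb f$ contributions, in $(s_1,s_2)$ with $s_1<s_2$ — the only Brownian functional that occurs is
\[
\E\,e^{ik.a(\xi_1)(B(t-s_1)-B(t))}\,e^{-ik.a(\xi_2)(B(t-s_2)-B(t))}=\exp\!\Big(-\f{s_1\,|k.(a(\xi_1)-a(\xi_2))|^2}{2}\Big)\exp\!\Big(-\f{(s_2-s_1)\,|k.a(\xi_2)|^2}{2}\Big),
\]
which is computed exactly as in Theorem \ref{th:aver} because the white noise only ever sees the real numbers $k.a(\xi_j)$; for the $g=0$ term it collapses to $\exp(-t|k.(a(\xi_1)-a(\xi_2))|^2/2)$.

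The one genuinely new point is the $\xi$-integration. In Theorem \ref{th:aver} one used that $\xi\mapsto k.\xi$ is \emph{linear} — choosing a frame with first axis $k/|k|$, Cauchy--Schwarz in the orthogonal hyperplane and Young's inequality in the one parallel coordinate. That device is unavailable for a nonlinear $a(\cdot)$, which is precisely why assumption \eqref{as_kinetic} is imposed: taking $e=k/|k|$ it gives $|k.(a(\xi_1)-a(\xi_2))|\ge A|k|\,|\xi_1-\xi_2|^\al$, and since the velocity variable here is scalar there is no transverse direction at all. After the elementary time integrations of the Gaussians above — an $\int_0^\infty e^{-2\lb t}\exp(-t|k.(a(\xi_1)-a(\xi_2))|^2/2)\,dt$ for the $f^0$ term, plus, for the $g$ and $\lb f$ terms, a Young-in-$\tau$ step $\int u_1(u_2*K)\le\|u_1\|_2\|u_2\|_2\|K\|_1$ with a truncated exponential kernel $K$, just as in Theorem \ref{th:aver} — every estimate is reduced to the one-dimensional bound
\[
\int_{\R}\f{d\zeta}{4\lb+A^2|k|^2\,|\zeta|^{2\al}}=\f{C(\al,A)}{\lb^{1-1/(2\al)}\,|k|^{1/\al}},
\]
obtained by the rescaling $\zeta\mapsto(\sqrt\lb/(A|k|))^{1/\al}\zeta$ (the integral converges at infinity because $\al\ge1$). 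The factor $|k|^{-1/\al}$ is precisely the $\dot H^{1/(2\al)}$ gain announced and the power $\lb^{-(1-1/(2\al))}$ the time-damping exponent; for $\al=1$ one recovers $\lb^{-1/2}|k|^{-1}$ and Theorem \ref{th:aver}.

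From here the four inequalities are assembled exactly as in Theorem \ref{th:aver}. For $g=0$ with the damping one keeps only the $f^0$ term and applies the one-dimensional $\xi$-bound; for $f^0=0$ with the damping the $g$ term carries one extra time convolution, absorbed by Young's inequality in $\tau$ together with $(\lb+|k.a(\xi_2)|^2)^{-1}\le\lb^{-1}$. For the two source-to-compactness estimates one retains the $\lb\psi\wh f$ term in the representation formula, estimates it by the same computation as the $g$ term (so its contribution is $\lb^2$ times a bound of $g$-type), and then optimizes in $\lb$, balancing the $\|\psi f^0\|_{L^2}^2$ (resp.\ $\|\psi g\|_{L^2}^2$) contribution against the $\|\psi f\|_{L^2}^2$ one; this yields the stated products of norms. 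Nothing in this last part is more than the arithmetic already carried out in Theorem \ref{th:aver}.

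\emph{Main obstacle.} The difficulty is structural rather than computational: one must reorganize the averaging argument so that it never uses linearity of the flux $\xi\mapsto a(\xi)$, replacing it throughout by the pointwise non-degeneracy \eqref{as_kinetic}. Once this is done, the Brownian computation closes verbatim (the noise couples only to $k.a(\xi)$) and the whole proof funnels into the single scaling identity for $\int d\zeta/(4\lb+A^2|k|^2|\zeta|^{2\al})$; it is the exponent $2\al$ there, together with the Gaussian variance being proportional to $t$ rather than to $t^2$ as in the deterministic case, that generates all the modified fractional and decay exponents.
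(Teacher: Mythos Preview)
Your proof is correct and follows essentially the same route as the paper's own argument: replace $k.\xi$ by $k.a(\xi)$ in the representation formula \eqref{general_rho}, carry the Brownian expectation computation through verbatim, invoke \eqref{as_kinetic} to reduce the $\xi$-convolution to the scalar kernel $\big(4\lb+|k|^2|\zeta|^{2\al}\big)^{-1}$ whose $L^1$ norm scales as $\lb^{-(1-1/2\al)}|k|^{-1/\al}$, and then assemble and optimise in $\lb$ exactly as in Theorem~\ref{th:aver}. Your remark that the scalar velocity variable removes the need for the orthogonal-frame decomposition used in Section~\ref{sec:space} is a helpful clarification that the paper leaves implicit.
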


\begin{proof} {\bf First class of results.} 
The representation formula  \eqref{general_rho} is simply changed to 
$$
e^{-\lb t} \wh{\rho_\psi}(k,t) =  \int \psi \wh{f^0}(k,\xi) e^{-\lb t-iB(t) k.a(\xi)} d \xi + \int_0^t \int e^{-\lb s}  \big[\psi \wh{g} +  \lb \psi \wh{f}] e^{i k.a(\xi)  \big(B(t-s) -B(t) \big)} ds  d\xi ,
$$
therefore, we can carry out the same analysis until we evaluate the convolution in $\xi$. For the term in $f^0$ for instance, we find 
\begin{align*}
\E \int_0^\infty e^{-2\lb t} |\wh{\rho_\psi}(k,t) |^2 &= 2 \int \psi \wh{f^0}(k,\xi_1) \;\overline{ \psi \wh{f^0}(k,\xi_2) } \; \f{1}{4\lb+ |k.(a(\xi_1)- a(\xi_2)|^2}  d \xi_2 d \xi_1
\\ \\
& \leq C \int \big| \psi \wh{f^0}(k,\xi_1)\big| \; \big| \overline{ \psi \wh{f^0}(k,\xi_2) }\big| \; \f{1}{4\lb+ |k|^2Ê\; |\xi_1- \xi_2|^{2\al}}  d \xi_2 d \xi_1
\\ \\
& \leq C \|  \psi \wh{f^0}(k,\xi)\|^2_{L^2_\xi} \;  \big\| \f{1}{4\lb+ |k|^2Ê\; |\xi|^{2\al}} \big\|_{L^1_\xi}.
\end{align*}
We  obtain the estimate 
$$
\big \| \f{1}{4\lb+ |k|^2Ê\; |\xi|^{2\al}} \big \|_{L^1_\xi} = \f{C}{\lb}  \left( \f{\sqrt \lb}{|k| } \right)^{1/\al } ,
$$
and the first result follows.
\\

For the second result, we use the inequality obtained in section \ref{sec:space} on the contribution to $\E \int_0^\infty  |\wh{\rho_\psi}(k,t) |^2$ from the quantity (with $g = \lb f$)
$$
{\mathcal I} :=  \E \int_{t=0}^\infty  \Big| \int_0^t \int e^{-\lb s} \psi \wh{g}(k,\xi,t-s)  e^{i k.\xi  \big(B(t-s) -B(t) \big)}  d\xi  ds \Big|^2 dt ,
$$ 
\begin{align*}
{\mathcal I}& \leq \f{2}{\lb} \int \left( \int |\psi \wh{g}(k,\xi_1,\tau_1)|^2 d\tau_1\right)^{1/2}Ê\left( \int |\psi \wh{g}(k,\xi_2,\tau_2)|^2 d\tau_2\right)^{1/2}Ê
\f{1}{4 \lb+|k.(\xi_1-\xi_2)|^2 }\;   d \xi_2 d \xi_1 
 \\
&\leq \f{2}{\lb} \int \left( \int |\psi \wh{g}(k,\xi_1,\tau_1)|^2 d\tau_1\right)^{1/2}Ê\left( \int |\psi \wh{g}(k,\xi_2,\tau_2)|^2 d\tau_2\right)^{1/2}Ê
\f{1}{4 \lb+|k|^2\; | \xi_1-\xi_2|^{2\al} }\;   d \xi_2 d \xi_1 
\\
& \leq C \|  \psi \wh g(k,\xi,t)\|^2_{L^2_{\xi,t}} \;  \| \f{1}{4\lb+ |k|^2Ê\; |\xi|^{2\al}} \|_{L^1_\xi}
\\
& \leq \f{C}{\lb}  \left( \f{\sqrt \lb}{|k| } \right)^{1/\al } \;  \|  \psi \wh g(k,\xi,t)\|^2_{L^2_{\xi,t}} .
\end{align*}

This gives
$$
|k| ^{1/\al } \;  \E \int_0^\infty |\wh{\rho_\psi}(k,t) |^2 \leq  \f{C}{\lb}  \lb^{1/2\al }  \|  \psi \wh{f^0}(k,\xi)\|^2_{L^2_\xi} + {C}{\lb} \;  \lb^{1/2\al } \;  \|  \psi \wh f(k,\xi,t)\|^2_{L^2_{\xi,t}} .
$$ 
And, after integrating in $k$ and then optimizing the value of $\lb$
$$
\int_{\R^d} |k| ^{1/\al } \;  \E \int_0^\infty |\wh{\rho_\psi}(k,t) |^2 d k \leq C \|  \psi \wh{f^0}\|_{L^2}^{1+1/2\al}  \|  \psi \wh f \|_{L^2}^{1-1/2\al}.
$$
This is the second result.
\\
\\
{\bf Second class of results.}  For the result with $\lb$, we use again our previous inequality in section \ref{sec:space} which we modify to change $\xi$ in $a(\xi)$. We obtain 
\begin{align*}
&\qquad \qquad  \f 12 \E \int_{t=0}^\infty |e^{-\lb t}\wh{\rho_\psi}(k,t) |^2
\\[2mm]
& \leq \f 1 \lb  \int \left( \int |e^{- \lb \tau_1}\psi \wh{g}(k,\xi_1,\tau_1)|^2 d\tau_1\; \int |e^{- \lb \tau_2} \psi \wh{g}(k,\xi_2,\tau_2)|^2 d\tau_2\right)^{1/2}Ê
\f{4}{2 \lb+|k.(a(\xi_1) - a(\xi_2))|^2 }\; d \xi_2 d \xi_1 
\\[2mm]
& \leq \f 1 \lb  \int \left( \int |e^{- \lb \tau_1}\psi \wh{g}(k,\xi_1,\tau_1)|^2 d\tau_1 \right)^{1/2}Ê\left(  \int |e^{- \lb \tau_2} \psi \wh{g}(k,\xi_2,\tau_2)|^2 d\tau_2\right)^{1/2}Ê
\f{4}{2 \lb+|k|^2\; |\xi_1 - \xi_2|^{2\al} }\; d \xi_2 d \xi_1 
\\[2mm]
& \leq \f{C}{\lb}  \left( \f{\sqrt \lb}{|k| } \right)^{1/\al } \; \int |e^{- \lb \tau}\psi \wh{g}(k,\xi,\tau)|^2 d\tau d\xi. 
\end{align*}
This gives the first result. We do not prove the second result which follows from the same combination as in section \ref{sec:space} with these ingredients.
\end{proof}

\section{Time regularity}
\label{sec:time}

We can also obtain time regularity with the method developed before. The gain of regularity is however lower than in the deterministic case (see Appendix \ref{ap:time}) which is a more visible symptom of the difference of scale between brownian motion and classical time. For deterministic kinetic transport equations,  space and time play a similar role  but this no longer true in the stochastic case (see the scale $|k| \sim \sqrt \lb$ below rather than $|k| \sim \lb$ in the deterministic case). 
%
\begin{theorem}[Stochastic time averaging for general velocity field] Take $B$ a brownian motion, $\lb>0$  and  $g=0$ in equation \eqref{skinetic}. Then for high Fourier frequencies in $x$,  $e^{-\lb t} \rho_\psi \in H^{1/4}\big(R^+; L^{2}(\R^d) \big)$, more precisely 
$$
\E \big\| \f{|k| \sqrt{\lb} }{\sqrt{\lb} + |k|} e^{-\lb t}  \wh \rho_\psi (k,t) \big\|^2_{H^{1/4}\big(R^+; L^{2}(\R^d) \big)} \leq C \|ÊÊf^0 \|^2_{L^2(\R^d\times \R^d)} .
$$

For $a(\xi)$ satisfying \eqref{as_kinetic}  in the stochastic kinetic equation \eqref{skineticg}, we have for all $ \beta = 1/4 \al$
$$
\E \big\|  \f{|k|^{1/\al} \lb^{1/2\al}  }{\lb^{1/2\al} + |k|^{1/\al}} e^{-\lb t} \wh \rho_\psi (k,t)\big \|^2_{H^{1/4}\big(R^+; L^{2}(\R^d) \big)} \leq C \|ÊÊf^0 \|^2_{L^2(\R^d\times \R)} .
$$
\label{th:averagetime} 
\end{theorem}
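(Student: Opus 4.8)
The plan is to follow the same pathwise-Fourier strategy used for the space-regularity theorems, but now retaining the time variable so as to produce fractional time derivatives. Since $g=0$, the representation formula \eqref{general_rho} with the damping $\lb$ gives, after the substitution already made in Section \ref{sec:space},
$$
e^{-\lb t}\wh\rho_\psi(k,t) = \int \psi\wh{f^0}(k,\xi)\, e^{-\lb t - iB(t)k.a(\xi)}\, d\xi,
$$
and the quantity of interest is the $H^{1/4}_t$ seminorm of this, i.e.\ $\E\int_{\R}|\tau|^{1/2}|\widehat{e^{-\lb t}\wh\rho_\psi}(k,\tau)|^2\,d\tau$ after a time Fourier transform, or equivalently the Gagliardo double-integral $\E\int_0^\infty\int_0^\infty |e^{-\lb t}\wh\rho_\psi(k,t)-e^{-\lb s}\wh\rho_\psi(k,s)|^2 |t-s|^{-3/2}\,dt\,ds$. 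I would work with the latter. Expanding the square and using the product structure in $\xi_1,\xi_2$ as in the earlier proofs, the core object becomes, for $t>s$,
$$
\E\,\big(e^{-\lb t - iB(t)k.a(\xi_1)} - e^{-\lb s - iB(s)k.a(\xi_1)}\big)\overline{\big(e^{-\lb t - iB(t)k.a(\xi_2)} - e^{-\lb s - iB(s)k.a(\xi_2)}\big)},
$$
which after expanding into four terms and computing each Gaussian expectation (using $\E e^{iB(t)u+iB(s)v}=\exp(-\tfrac12(s(u+v)^2+(t-s)u^2))$ for $t>s$, exactly the computation already carried out in Section \ref{sec:space}) reduces to explicit exponentials in $|k.(a(\xi_1)-a(\xi_2))|^2$ and $|k.a(\xi_i)|^2$ times $e^{-\lb(t+s)}$.

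Next I would carry out the $t,s$ integration against $|t-s|^{-3/2}$. The key elementary estimate is of the type
$$
\int_0^\infty\int_0^\infty \big|e^{-(\lb+a)t}-e^{-(\lb+a)s}\big|^2\,\frac{dt\,ds}{|t-s|^{3/2}} \;\le\; C\,\frac{\sqrt{\lb+a}}{\lb+a}\cdot(\lb+a)^{?}\ \sim\ C(\lb+a)^{-1/2}\cdot(\text{something}),
$$
more precisely the scaling $\int\int |e^{-ct}-e^{-cs}|^2|t-s|^{-3/2}\,dt\,ds = C\sqrt c$ by homogeneity; combined with the remaining exponential factors in the velocity variables this produces, after the $\xi$-convolution estimate and the non-degeneracy assumption \eqref{as_kinetic} exactly as in Theorem \ref{th:averg}, a bound of the form
$$
\E\,\Big\| e^{-\lb t}\wh\rho_\psi(k,t)\Big\|^2_{\dot H^{1/4}_t} \;\le\; \frac{C}{\sqrt\lb}\,\Big(\frac{\sqrt\lb}{|k|}\Big)^{1/\al}\,\|\psi\wh{f^0}(k,\cdot)\|^2_{L^2_\xi},
$$
so that multiplying by the symbol $\big(|k|^{1/\al}\lb^{1/2\al}/(\lb^{1/2\al}+|k|^{1/\al})\big)^2$ and integrating in $k$ (the symbol being designed precisely to absorb the factor $\sqrt\lb\,(\sqrt\lb/|k|)^{-1/\al}$ up to the crossover $|k|\sim\sqrt\lb$) gives the stated inequality; the $L^2_t$ part of $H^{1/4}_t$ is handled by the already-established space estimate of Theorem \ref{th:averg}(i). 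The single-brownian case is $\al=1$.

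The main obstacle I anticipate is bookkeeping the four cross terms in the time-increment expansion: unlike the pure space estimate, the difference $e^{-\lb t-iB(t)k.a(\xi_1)}-e^{-\lb s-iB(s)k.a(\xi_1)}$ does not factor, so the Gaussian expectations involve $B(t)$ and $B(s)$ jointly at the two (generally distinct) velocities, and one must check that the ``diagonal'' terms (same time, $s_1=s_2$ so to speak) reproduce the $\dot H^{1/2}_x$-type estimate while the genuine increment terms carry the extra $|t-s|^{-3/2}$ weight without generating a divergence near $t=s$. The $|t-s|^{-3/2}$ singularity is integrable against a function vanishing like $|t-s|$, which the increment does after using $|e^{-\lb t}-e^{-\lb s}|\le \lb|t-s|$ and $|e^{i\theta}-1|\le|\theta|$ together with the Gaussian moments $\E|B(t)-B(s)|\sim\sqrt{|t-s|}$ — so the borderline integrability is exactly $|t-s|^{-3/2}\cdot|t-s|\in L^1_{loc}$ in one of the two regimes and needs the Gaussian decay in the other; getting the constant to come out with the clean scaling $\sqrt\lb$ rather than something worse is the delicate point. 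I would organize the estimate so that the time integral is performed first, in closed form via the homogeneity scaling, before touching the velocity integral, which then becomes verbatim the computation of Theorem \ref{th:averg}.
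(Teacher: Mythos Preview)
Your plan is the paper's: Gagliardo seminorm in $t$, expand the square in $\xi_1,\xi_2$, compute the Gaussian expectations, do the time integral first, then the velocity convolution exactly as in Theorem~\ref{th:averg}. Two points where the paper is sharper than your sketch.

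First, instead of ``expanding into four terms'' and then worrying about the $|t-s|^{-3/2}$ divergence term by term, the paper factors each increment as
\[
e^{-\lb t - iB(t)k.a_j} - e^{-\lb s - iB(s)k.a_j} \;=\; e^{-\lb s - iB(s)k.a_j}\Big(e^{-\lb(t-s)-i(B(t)-B(s))k.a_j}-1\Big),
\]
so the product of the two increments splits into a factor depending only on $B(t)-B(s)$ times $e^{-2\lb s + iB(s)k.(a_2-a_1)}$ depending only on $B(s)$. Independence makes the expectation a product; after $u=t-s$ the $s$-integral gives exactly the kernel $\big(2\lb+|k.(a_1-a_2)|^2\big)^{-1}$ from the space estimate, and the $u$-integral is the clean object
\[
\int_0^\infty \big[e^{-uA}-e^{-uB}-e^{-uC}+1\big]\,\frac{du}{u^{1+2\beta}} \;\le\; C\big(A^{2\beta}+B^{2\beta}+C^{2\beta}\big),
\]
with $A=2\lb+\tfrac12|k.(a_1-a_2)|^2$, $B=\lb+\tfrac12|k.a_1|^2$, $C=\lb+\tfrac12|k.a_2|^2$ (this is Lemma~\ref{lm:technic}). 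Your pathwise bound $|e^{i\theta}-1|\le|\theta|$ together with $\E|B(t)-B(s)|\sim\sqrt{|t-s|}$ would also control the singularity, but you lose the exact structure and have to redo the cancellation by hand.

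Second, and this is a genuine (if minor) gap: your claimed output $\tfrac{C}{\sqrt\lb}\big(\sqrt\lb/|k|\big)^{1/\al}$ is too good. The terms $B^{2\beta},C^{2\beta}$ above involve $|k.a_i|^2$, not just $|k.(a_1-a_2)|^2$, and on $\mathrm{supp}\,\psi$ they contribute an additional factor $1+|\tilde k|^{4\beta}$ with $\tilde k=k/\sqrt\lb$. This extra growth in $|k|$ is precisely what forces the crossover weight $\dfrac{|k|^{1/\al}\lb^{1/2\al}}{\lb^{1/2\al}+|k|^{1/\al}}$ in the statement, rather than a pure power of $|k|$.
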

%

Our strategy of proof uses the integral characterization of the fractional Sobolev space (\cite{stein}, p139) and thus we define for $0< \beta < 1$, 
$$
\| u \|^2_{\dot H^{\beta}(\R^+)} = \int_{[0,\infty] \times [0,\infty]} \f{|u(t)-u(s)|^2}{[t-s]^{1+2\beta} }ds \, dt. 
$$
The proof can be used also in the deterministic case and gives the correct regularity $H^{1/2}$. See Appendix \ref{ap:time}.
\\

\begin{proof}  We compute for $t>s$ and with $a_1=a(\xi_1)$, $a_2=a(\xi_2)$,
\begin{align*}
&\E | e^{-\lb t} \wh{\rho_\psi}(k,t) - e^{-\lb s} \wh{\rho_\psi}(k,s) |^2 
\\[3mm]
& = \E \int \psi \wh{f^0}(k,\xi_1)  \overline{ \psi \wh{f^0}(k,\xi_2) }  \left(e^{-\lb t -iB(t)k.a_1}-e^{-\lb s - iB(s)k.a_1} \right)\left(e^{-\lb t + iB(t)k.a_2}-e^{-\lb s + iB(s)k.a_2} \right)  d \xi_2 d \xi_1
\\[3mm]
& =  \E \int \psi \wh{f^0}(k,\xi_1) \,\overline{ \psi \wh{f^0}(k,\xi_2) } \, \left(e^{-\lb (t-s) -i(B(t)-B(s))k.a_1} -1 \right)\left(e^{-\lb (t-s) +i(B(t)-B(s)) k.a_2} - 1 \right) 
\\
& \hspace{12cm}  e^{-2 \lb s + iB(s)k.(a_2-a_1)} d \xi_2 d \xi_1
\\[3mm]& = \f 1{2 \pi} \int \psi \wh{f^0}(k,\xi_1) \;\overline{ \psi \wh{f^0}(k,\xi_2) } \; \left(e^{-\lb (t-s) -iw.k.a_1} -1 \right)\left(e^{-\lb (t-s) i w. k.a_2} - 1 \right)  e^{-2 \lb s+ iz k.(a_2-a_1)}  
\\
& \hspace{10cm}e^{-\f{|w|^2}{2(t-s)}} e^{-\f{|z|^2}{2s}}  \f{dw}{ (t-s)^{d/2}}Ê \; \f{dz} {s^{d/2}}Ê \;   d\xi_2 \; d \xi_1 
\end{align*}

For the fractional Sobolev exponent $0<\beta <1$, we need the 
\begin{lemma}
\begin{align*}
\Big|  \int_{s=0}^\infty \int_{t=s}^\infty  \int_{w, z} & \left(e^{-\lb (t-s) -iw.k.a_1} -1 \right) \left(e^{-\lb (t-s) i w. k.a_2} - 1 \right)  
\\
& \hspace{2cm}e^{-2 \lb s+ iz k.(a_2-a_1)}  
  e^{-\f{|w|^2}{2(t-s)}} e^{-\f{|z|^2}{2s}}  \f{dw}{ (t-s)^{d/2}}Ê\; \f{dz} {s^{d/2}} \;  \f{dt \; ds}{|t-s|^{1+2 \beta}}  \Big|
\\
&\leq   \f{C(\beta) \lb^{2\beta-1} \;(1+ | \wt  k|^{4\beta}) }{ 2+ | \wt k.(a_2-a_1)|^2}  
\end{align*}
with $\wt k= k/\sqrt{\lb}$ and $C(\beta)=O(\f{1}{(1-2\beta)^{2\beta}})$.

\label{lm:technic} \end{lemma}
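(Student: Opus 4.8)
The plan is to carry out the two Gaussian (expectation) integrals in $w$ and $z$ first, which are absolutely convergent for each fixed $s,t$ and which reduce the statement to an elementary, decoupled one‑dimensional estimate. Integrating out $z$ against the Gaussian of variance $s$ gives, up to an absolute constant, the factor $\exp\!\big(-\tfrac{s}{2}|k.(a_2-a_1)|^2\big)$. Expanding the product of the two bracketed factors (write $\tau=t-s$)
$$
\big(e^{-\lb\tau-iw\,k.a_1}-1\big)\big(e^{-\lb\tau+iw\,k.a_2}-1\big)=e^{-2\lb\tau+iw\,k.(a_2-a_1)}-e^{-\lb\tau-iw\,k.a_1}-e^{-\lb\tau+iw\,k.a_2}+1
$$
and integrating each of the four terms against the Gaussian of variance $\tau$ produces, again up to an absolute constant, the real function
$$
\Phi(\tau)=e^{-(2\lb+m^2)\tau}-e^{-(\lb+p_1^2)\tau}-e^{-(\lb+p_2^2)\tau}+1,\qquad m^2:=\tfrac12|k.(a_2-a_1)|^2,\quad p_i^2:=\tfrac12|k.a_i|^2 .
$$
After the change of variables $(s,t)\mapsto(s,\tau)$ with $\tau\ge0$, the integral in the lemma becomes
$$
\mathrm{const}\cdot\Big(\int_0^\infty e^{-(2\lb+m^2)s}\,ds\Big)\Big(\int_0^\infty\f{\Phi(\tau)}{\tau^{1+2\beta}}\,d\tau\Big)=\mathrm{const}\cdot\f{2}{4\lb+|k.(a_2-a_1)|^2}\cdot J,\qquad J:=\int_0^\infty\f{\Phi(\tau)}{\tau^{1+2\beta}}\,d\tau .
$$
Since $\dfrac{2}{4\lb+|k.(a_2-a_1)|^2}$ is bounded by $\dfrac{C}{\lb\,(2+|\wt k.(a_2-a_1)|^2)}$ with $\wt k=k/\sqrt\lb$, everything reduces to showing $|J|\le C(\beta)\,\lb^{2\beta}\big(1+|\wt k|^{4\beta}\big)$.

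\textbf{The $\tau$-integral.} I would exploit the algebraic cancellation built into $\Phi$. From $2\lb+m^2=(\lb+p_1^2)+(\lb+p_2^2)-\big(p_1^2+p_2^2-m^2\big)$ together with the identity $p_1^2+p_2^2-m^2=(k.a_1)(k.a_2)$, one reads off $\Phi(0)=0$ and $|\Phi(\tau)|\le\min\{C(\lb+|k|^2)\tau,\ 4\}$, so $J$ converges absolutely precisely because $0<\beta<\tfrac12$ (the relevant range, since $\beta=1/(4\al)\le1/4$); this also legitimates the interchange of integrations that produced the factorisation above. Regrouping
$$
\Phi(\tau)=\big(1-e^{-(\lb+p_1^2)\tau}\big)+\big(1-e^{-(\lb+p_2^2)\tau}\big)-\big(1-e^{-(2\lb+m^2)\tau}\big)
$$
and using the classical identity $\int_0^\infty\tau^{-1-2\beta}\big(1-e^{-c\tau}\big)\,d\tau=-\Gamma(-2\beta)\,c^{2\beta}$, valid for $c>0$ and $0<2\beta<1$, gives the closed form
$$
J=-\Gamma(-2\beta)\,\Big[(\lb+p_1^2)^{2\beta}+(\lb+p_2^2)^{2\beta}-(2\lb+m^2)^{2\beta}\Big] .
$$
The prefactor $-\Gamma(-2\beta)=\Gamma(1-2\beta)/(2\beta)$ is the source of the constant $C(\beta)$: it is finite on $(0,\tfrac12)$ and blows up as $\beta\to\tfrac12$, in the order stated. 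Finally, since $x\mapsto x^{2\beta}$ is subadditive on $[0,\infty)$ for $0\le2\beta\le1$, and since $p_i^2\le C|k|^2$ and $m^2\le C|k|^2$ with $C$ a bound for $|a(\xi_i)|$ (finite because $a\in C^1$ and, in the application, $\xi_i\in\mathrm{supp}\,\psi$), we get
$$
|J|\le -\Gamma(-2\beta)\Big[(\lb+p_1^2)^{2\beta}+(\lb+p_2^2)^{2\beta}+(2\lb+m^2)^{2\beta}\Big]\le C(\beta)\big(\lb^{2\beta}+|k|^{4\beta}\big)=C(\beta)\,\lb^{2\beta}\big(1+|\wt k|^{4\beta}\big),
$$
using $|k|^{4\beta}=\lb^{2\beta}|\wt k|^{4\beta}$. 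Multiplying by the elementary $s$-integral yields the announced bound.

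\textbf{Main point of care.} There is no deep obstruction; the three places needing attention are: (i) the near-origin behaviour $\Phi(\tau)=O(\tau)$, which hinges on the cancellation of the constant term among the four summands of $\Phi$ and which is what forces $\beta<\tfrac12$ — and which is also why one must perform the $w,z$-integrations \emph{before} the $(s,t)$-integration: the full four-fold integrand is not absolutely integrable near $\tau=0$, so a naive application of Fubini on it would fail; (ii) the presence of $|\wt k|^{4\beta}$, rather than $|\wt k.(a_2-a_1)|$, in the numerator — this is genuine, it comes from the terms $(\lb+p_i^2)^{2\beta}$, and it requires an \emph{upper} bound on $|k.a(\xi_i)|$, which the non-degeneracy hypothesis \eqref{as_kinetic} does not supply, so compactness of $\mathrm{supp}\,\psi$ enters here; and (iii) bookkeeping the constant $C(\beta)$, which is read off directly from the $\Gamma$-function identity. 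As an alternative to the exact evaluation, one may split $\int_0^\infty=\int_0^\delta+\int_\delta^\infty$ with $\delta\sim(\lb+|k|^2)^{-1}$, using $|\Phi(\tau)|\le C(\lb+|k|^2)\tau$ on the first interval (from $\sup_\tau|\Phi'|\le C(\lb+|k|^2)$) and $|\Phi|\le4$ on the second, and optimising in $\delta$; this again gives $|J|\le C(\beta)(\lb+|k|^2)^{2\beta}$ with a constant of the same order.
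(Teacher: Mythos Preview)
Your proof is correct and follows essentially the same route as the paper: perform the Gaussian integrals in $w$ and $z$, change variables $(s,t)\mapsto(s,\tau)$ with $\tau=t-s$ so that the integral factorises, integrate out $s$ to produce the denominator $\big(2\lb+|k.(a_2-a_1)|^2/2\big)^{-1}$, and then control the remaining one-dimensional $\tau$-integral by exploiting the cancellation $\Phi(0)=0$ and the structure $\Phi(\tau)=\sum\pm(1-e^{-c_j\tau})$, which scales like $c_j^{2\beta}$. The only difference is cosmetic: you evaluate $J$ exactly via the identity $\int_0^\infty\tau^{-1-2\beta}(1-e^{-c\tau})\,d\tau=-\Gamma(-2\beta)\,c^{2\beta}$ and read off $C(\beta)$ from the Gamma function, whereas the paper simply asserts the bound $|J|\le C\sum c_j^{2\beta}$; your alternative split $\int_0^\delta+\int_\delta^\infty$ is in the same spirit.
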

%
\noindent {\bf Proof of Lemma \ref{lm:technic}.} With the change of variable $t \mapsto u=t-s$, we estimate the expression of interest as
\begin{align*}
& \left|  \int_{s=0}^\infty \int_{u=0}^{\infty}  \int_{w, z}  \left(e^{ -\lb u -iw\, k.a_1} -1 \right)\left(e^{-\lb u + i w \, k.a_2} - 1 \right)  e^{-2 \lb siz \, k.(a_2-a_1)}  e^{-\f{|w|^2}{2u}} e^{-\f{|z|^2}{2s}}  \f{dw}{ u^{d/2}}Ê\; \f{dz} {s^{d/2}} \;  \f{du \; ds}{u^{1+2 \beta}}  \right|
\\[2mm]
&= 2 \pi \left|  \int_{s=0}^\infty \int_{u=0}^{\infty}   \left( e^{-u [2 \lb+ | k.(a_2-a_1)|^2/2]} - e^{-u [ \lb+ | k.a_1|^2/2]} - e^{-u[\lb+ | k.a_2|^2/2]} +1  \right)  e^{-s[2 \lb+ |k.(a_2-a_1)|^2/2]}  \f{du \, ds}{u^{1+2 \beta}}  \right|
\\[2mm]
&=    \f{2 \pi}{ 2 \lb+ |k.(a_2-a_1)|^2}  \left|  \int_{u=0}^{\infty} \big[e^{-u [2 \lb+ | k.(a_2-a_1)|^2/2]} - e^{-u [ \lb+ | k.a_1|^2/2]} - e^{-u[\lb+ | k.a_2|^2/2]} +1 \big]\,   \f{du}{u^{1+2 \beta}} \right| 
\\[2mm]
&  \leq  \f{C}{ 2 \lb+ |k.(a_2-a_1)|^2} \Big[\; \big|2 \lb+ | k.(a_2-a_1)|^2/2\big|^{2\beta} + \big| \lb+ | k.a_1|^2/2]\big|^{2\beta} + \big| \lb+ | k.a_2|^2/2\big|^{2\beta} \; \Big]
\\[2mm]
&  \leq  \f{C(\beta) \lb^{2\beta-1} }{ 2+ | \wt k.(a_2-a_1)|^2} \Big[\; |4 + | \wt  k.(a_2-a_1)|^2|^{2\beta} + | 2+ | \wt k.a_1|^2]|^{2\beta} + | 2 + | \wt k.a_2|^2|^{2\beta} \; \Big]
\end{align*}
The result follows. 
\qed
\bigskip

We go back to our estimate on $\E |\wh{\rho_\psi}(k,t) -\wh{\rho_\psi}(k,s) |^2$ and we  have to estimate the quantity 
$$
I(k)= \E \int_{s=0}^\infty  \int_{t=s}^\infty \frac{ |\wh{\rho_\psi}(k,t) -\wh{\rho_\psi}(k,s) |^2 }{|t-s|^{1+2 \beta} }dt \; ds.
$$ 
With the above lemma we conclude that, in the case $a(\xi)= \xi$,  
\begin{align*}
I(k) &  \leq C  \int | \psi \wh{f^0}(k,\xi_1) |\; |\overline{ \psi \wh{f^0}(k,\xi_2) } |\;  \f{C(\beta) \lb^{2\beta-1} \; (1+| \wt  k|^{4\beta}) }{ 2+ | \wt k.(a_2-a_1)|^2}   d \xi_2 d \xi_1
\\[2mm]
& \leq C (\beta, {\rm supp} \psi)\; \| \psi \wh{f^0} \|^2_{L^2_\xi} \; \Big\|Ê\f{ \lb^{2\beta-1} (1+ | \wt  k|^{4\beta}) }{ 2+ | \wt k. \xi |^2 }\Big\|_{L^1_\xi}
\\[2mm]
& \leq C (\beta, {\rm supp} \psi) \;  \| \psi \wh{f^0} \|^2_{L^2_\xi} \f{ 1+ | \wt  k|^{4\beta} }{|k|} \lb^{2\beta-1/2}
\end{align*}
which leads us to choose $\beta=1/4$.
\\

In the general case we conclude with (still with a appropriate choice of coordinates in $\xi$)
\begin{align*}
I(k) &  \leq  \int | \psi \wh{f^0}(k,\xi_1) |\; |\overline{ \psi \wh{f^0}(k,\xi_2) } |\;  \f{C(\beta) \lb^{2\beta-1} \; (1+| \wt  k|^{4\beta}) }{ 2+ | \wt k|^2|(\xi_2-\xi_1)|^{2\al}}   d \xi_2 d \xi_1
\\[2mm]
& \leq C (\beta, {\rm supp} \psi)\; \| \psi \wh{f^0} \|^2_{L^2_\xi} \; \Big\|Ê\f{ \lb^{2\beta-1} (1+ | \wt  k|^{4\beta}) }{ 2+ | \wt k|^2 | \xi |^{2\al} }\Big\|_{L^1_\xi}
\\[2mm]
& \leq C (\beta, {\rm supp} \psi) \;  \| \psi \wh{f^0} \|^2_{L^2_\xi} \f{ 1+ | \wt  k|^{4\beta} }{|k|^{1/\al}} \lb^{2\beta-1/(2\al)}
\end{align*}
which leads to the choice $\beta = 1/4\al$.
\end{proof}

\appendix

\section{Deterministic averaging lemma,  space regularity}
\label{ap:ald}

In the first statement of Theorem \ref{th:aver}, a coefficient $\sqrt \lb$ appears; it expresses that the time decay in the stochastic case is slow. In the deterministic result of Theorem \ref {th:averd} (i), this parameter $\lb$ is not needed and thus, the averaging lemma in space also expresses faster time decay.  Being a time scale parameter, this slower decay is not surprising when a brownian motion is involved. To  better understand how it appears, we give the corresponding proof of the deterministic averaging lemma.

We consider the regularizing   effect with respect to the initial data (first statement of Theorem \ref{th:averd}, i.e., the solution to 
\begin{equation}\label{dkinetic}
\begin{cases}
\f{\p}{\p t} f(x,\xi,t)  + \xi . \nabla_x f + \lb f = 0  \quad \text{ in } \quad \R^{2d} \times(0,\infty), 
\\[2mm]
f(t=0) =f^0 \quad \text{ on } \quad \R^{2d}.
\end{cases}
\end{equation}
The solution is given by
$$
\wh{f}(k,\xi,t)= \wh{f^0}(k,\xi)e^{(-\lb+i\xi.k)t}, 
\qquad 
\wh{\rho_\psi } (k,t)= \int \psi\wh{f^0}(k,\xi)e^{(-\lb+i\xi.k)t} d\xi, 
$$
\begin{align*}
  \int_0^\infty e^{-2 \lb t}  |\wh{\rho_\psi } (k,t)|^2 dt & =  \int \int_{t=0}^\infty \psi\wh{f^0}(k,\xi_1)  \overline{\psi\wh{f^0}(k,\xi_2)} e^{-\big(2\lb- i(\xi_1-\xi_2).k\big)t} dt d\xi_1 \; d\xi_2
\\[2mm]
&=  \int \psi\wh{f^0}(k,\xi_1) \; \overline{\psi\wh{f^0}(k,\xi_2)} \; \f{1}{2\lb- i(\xi_1-\xi_2).k}\;  d\xi_1 \; d\xi_2
\\[2mm]
& \leq  \f{C}{|k|}\| \psi\wh{f^0}(k,\xi) \|^2_{L^2(\R_\xi)} 
\end{align*}

Before we explain this last line, we point out that the difference with the stochastic case appears in the Fourier multiplier, here $\f{1}{2\lb- i(\xi_1-\xi_2).k}$ and in the stochastic case $\f{1}{2\lb+ |(\xi_1-\xi_2).k|^2}$. The homogeneity in $k$ compared to $\xi$ is the same (hence the same gain of regularity) but the decay in $\lb$ is different: hyperbolic in the deterministic case, parabolic in the stochastic case. 
\\

Following the argument for the proof of Theorem \ref{th:aver}, in the case at hand, after reduction to the direction of $k$ for $\xi$, the convolution kernel $\f{1}{2\lb- i \xi |k| }$ is not in $L^1_\xi$. However, it is still bounded by $\f{C}{|k|}$ as an operator on $L^2_\xi$, e.g., because its Fourier transform is $\f{1}{|k|} H(\eta) e^{-\lb \eta/|k|}$ and it is bounded by $1/|k|$ independently of $\lb$; that this operator is bounded in $L^2_\xi$ relies however on the general structure of a Calder\'on-Zygmund operator (see \cite{stein} for instance) and not on its specific form that allows to compute exactly its Fourier transform.

\section{Deterministic averaging lemma,  time regularity}
\label{ap:time}

For the stochastic averaging lemma, we have presented a proof of time regularity that leads to a gain of $\f14$th derivative. We present here the same proof in the deterministic case so as to show it recovers the optimal result with a gain of $\f 12$ derivatives.

\begin{theorem}[Deterministic time averaging] Take $B(t)=t$, $\lb \geq 0$ and $g=0$ in equation \eqref{skinetic}, then  $e^{-\lb t} \rho_\psi \in \dot H^{1/2}\big(R^+; L^{2}(\R^d) \big)$, more precisely 
$$
\| e^{-\lb t}  \wh \rho_\psi (k,t) \|^2_{H^{1/4}\big(R^+; L^{2}(\R^d) \big)} \leq C \|ÊÊf^0 \|^2_{L^2(\R^d\times \R^d)} .
$$
\label{th:averagetimed} 
\end{theorem}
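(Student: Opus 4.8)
The plan is to mirror the stochastic argument of Theorem~\ref{th:averagetime} but with the Brownian factor $e^{iB(t)k.\xi}$ replaced by the deterministic $e^{it k.\xi}$, and to track how this changes the homogeneity in $\lb$ and $|k|$. First I would write the solution explicitly as in Appendix~\ref{ap:ald}, namely $\wh\rho_\psi(k,t)=\int\psi\wh{f^0}(k,\xi)e^{(-\lb+i\xi.k)t}d\xi$, so that
$$
e^{-\lb t}\wh\rho_\psi(k,t)-e^{-\lb s}\wh\rho_\psi(k,s)=\int\psi\wh{f^0}(k,\xi)\,e^{-\lb s+i s\,\xi.k}\big(e^{-\lb(t-s)+i(t-s)\xi.k}-1\big)\,d\xi.
$$
Squaring, introducing a second variable $\xi_2$, and using the integral characterization $\|u\|^2_{\dot H^\beta(\R^+)}=\int\int|u(t)-u(s)|^2/|t-s|^{1+2\beta}\,dt\,ds$ with $\beta=1/4$, I would reduce $I(k)=\int_{s=0}^\infty\int_{t=s}^\infty|e^{-\lb t}\wh\rho_\psi(k,t)-e^{-\lb s}\wh\rho_\psi(k,s)|^2/|t-s|^{1+2\beta}\,dt\,ds$ to a double $\xi$-integral against the kernel obtained from the $t,s$-integration.

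The core computation is the deterministic analogue of Lemma~\ref{lm:technic}: after the change of variables $u=t-s$, the $s$-integral of $e^{-s[2\lb-i(\xi_1-\xi_2).k]}$ gives a factor $1/(2\lb-i(\xi_1-\xi_2).k)$, and the $u$-integral of $\big(e^{-\lb u+iu\,\xi_1.k}-1\big)\overline{\big(e^{-\lb u+iu\,\xi_2.k}-1\big)}/u^{1+2\beta}$ is a convergent integral (near $u=0$ the bracket is $O(u)$ since $2\beta=1/2<1$, and at infinity the $e^{-\lb u}$-type factors decay once $\lb>0$, or even for $\lb=0$ thanks to oscillation combined with the $-1$ terms, understood as an oscillatory integral). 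Expanding the product gives four terms $e^{-u[2\lb-i(\xi_1-\xi_2).k]}-e^{-u[\lb-i\xi_1.k]}-e^{-u[\lb+i\xi_2.k]}+1$, and using $\int_0^\infty(e^{-au}-e^{-bu})u^{-1-2\beta}du=C(\beta)(a^{2\beta}-b^{2\beta})$ (valid for $\mathrm{Re}\,a,\mathrm{Re}\,b\ge0$, extended by analyticity) bounds the $u$-integral by $C(\beta)\big(|2\lb-i(\xi_1-\xi_2).k|^{2\beta}+|\lb-i\xi_1.k|^{2\beta}+|\lb+i\xi_2.k|^{2\beta}\big)$. On the support of $\psi$ this is $\le C\lb^{2\beta}(1+|\wt k|^{4\beta})$ with $\wt k=k/\lb$ (or, cleaner here since the decay is hyperbolic, $\le C\lb^{2\beta-1}|k|(1+\dots)$-type bounds); the upshot is a kernel of the form $C(\beta)\lb^{2\beta}(1+|\wt k|^{4\beta})/|2\lb-i(\xi_1-\xi_2).k|$.

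Then I would close exactly as in the proof of Theorem~\ref{th:averagetime}: bound $I(k)\le C\|\psi\wh{f^0}(k,\cdot)\|_{L^2_\xi}^2\big\|\lb^{2\beta}(1+|\wt k|^{4\beta})/(2\lb-i\xi.k)\big\|_{?}$, but here the kernel $1/(2\lb-i\eta|k|)$ in the $k$-direction is \emph{not} in $L^1_\xi$ — just as in Appendix~\ref{ap:ald}. So instead of Young's inequality I would invoke that $\eta\mapsto 1/(2\lb-i\eta|k|)$ acts on $L^2_\xi$ with norm $\le C/|k|$, uniformly in $\lb$ (its Fourier transform is $\tfrac1{|k|}H(\zeta)e^{-2\lb\zeta/|k|}$, bounded by $1/|k|$; boundedness on $L^2$ being the Calder\'on--Zygmund/Hilbert-transform structure, cf.\ \cite{stein}), while Cauchy--Schwarz handles the orthogonal $d-1$ directions giving $C(\mathrm{supp}\,\psi)$. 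This yields $I(k)\le C(\beta,\mathrm{supp}\,\psi)\,\lb^{2\beta}(1+|\wt k|^{4\beta})\,|k|^{-1}\,\|\psi\wh{f^0}(k,\cdot)\|_{L^2_\xi}^2$; with $\beta=1/4$ the prefactor $\lb^{1/2}(1+|k/\lb|)|k|^{-1}$ stays bounded (it is $O(\lb^{1/2}|k|^{-1}+\lb^{-1/2})$), so for fixed $\lb>0$ one gets $\sum$-in-$k$ control by $\|f^0\|_{L^2}^2$, and letting $\lb\downarrow0$ recovers the clean $\dot H^{1/2}(\R^+;L^2)$ statement with constant independent of $\lb$.

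The main obstacle I expect is the rigorous handling of the oscillatory $u$- and $s$-integrals when $\lb$ is small (or zero): the formula $\int_0^\infty(e^{-au}-e^{-bu})u^{-1-2\beta}du=C(\beta)(a^{2\beta}-b^{2\beta})$ must be justified for purely imaginary arguments via analytic continuation / regularization, and one must check the resulting bounds are uniform in $\lb\to0$ so that the limit in the final inequality is legitimate — exactly the point where the deterministic ($1/(2\lb-i\eta|k|)$, hyperbolic) and stochastic ($1/(2\lb+|\eta k|^2)$, parabolic) scalings diverge and why the $L^2_\xi$-operator-norm argument (rather than an $L^1_\xi$ bound) is forced upon us.
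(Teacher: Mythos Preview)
Your argument misses the one structural point that distinguishes the deterministic computation from the stochastic one and is therefore stuck at the suboptimal exponent. After expanding, the $u$-integrand is
\[
e^{-u[2\lb -i(\xi_1-\xi_2).k]}-e^{-u[\lb -i\xi_1.k]}-e^{-u[\lb+i\xi_2.k]}+1
=\big(e^{-u(\lb-i\xi_1.k)}-1\big)\big(e^{-u(\lb+i\xi_2.k)}-1\big),
\]
which is $O(u^2)$ near $u=0$, not $O(u)$. This extra cancellation --- absent in the stochastic case because the Gaussian averaging produces real exponents $e^{-u|k.\xi|^2/2}$ that no longer factor this way --- is exactly what allows $\beta=\tfrac12$ here. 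The paper exploits it by splitting $\int_0^R+\int_R^\infty$, bounding the first piece by $C(|a|^2+|b|^2)R^{2-2\beta}$ and the second by $CR^{-2\beta}$, and optimizing in $R$ to get $|F(a,b)|\le C(|a|^2+|b|^2)^\beta$; with $\beta=\tfrac12$ and $a=\lb-i\xi_1.k$, $b=\lb+i\xi_2.k$, this is $\le C(\lb^2+|k|^2)^{1/2}$ on $\mathrm{supp}\,\psi$, and after applying the Calder\'on--Zygmund multiplier $1/(2\lb-i\eta|k|)$ (bound $C/|k|$, but in fact $C/(\lb+|k|)$ since the Fourier transform is $\tfrac{1}{|k|}H(\zeta)e^{-2\lb\zeta/|k|}$) the prefactor is $(\lb^2+|k|^2)^{1/2}/(\lb+|k|)\le 1$, uniformly in $\lb\ge 0$ and $k$.

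By contrast, your pairwise estimate $\int_0^\infty(e^{-au}-e^{-bu})u^{-1-2\beta}du=C(\beta)(a^{2\beta}-b^{2\beta})$ only uses first-order cancellation and forces $\beta<\tfrac12$; you then pick $\beta=\tfrac14$. But even at $\beta=\tfrac14$ your bookkeeping does not close: your own prefactor $\lb^{1/2}|k|^{-1}+\lb^{-1/2}$ is \emph{not} bounded as $\lb\downarrow 0$ (nor for small $|k|$), so the last sentence of your proposal is inconsistent with the estimate just above it. (The $H^{1/4}$ in the displayed inequality of the statement is a typo carried over from Theorem~\ref{th:averagetime}; the text and the paper's own proof make clear the target is $\dot H^{1/2}$.) The fix is not analytic-continuation subtleties in the $u$-integral --- it is to keep the product form and use the $O(u^2)$ vanishing.
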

%

\begin{proof}  We compute for $t>s$ 
\begin{align*}
& | e^{-\lb t} \wh{\rho_\psi}(k,t) - e^{-\lb s} \wh{\rho_\psi}(k,s) |^2 
\\[3mm]
& =  \int \psi \wh{f^0}(k,\xi_1)  \overline{ \psi \wh{f^0}(k,\xi_2) }  \left(e^{-\lb t -i t k.\xi_1}-e^{-\lb s - i s k.\xi_1} \right)\left(e^{-\lb t + i t k.\xi_2}-e^{-\lb s + i s k.\xi_2} \right)  d \xi_2 d \xi_1
\\[3mm]
& =  \int \psi \wh{f^0}(k,\xi_1) \,\overline{ \psi \wh{f^0}(k,\xi_2) } \, \left(e^{-\lb (t-s) -i(t-s)k.\xi_1} -1 \right)\left(e^{-\lb (t-s) +i(t-s) k.\xi_2} - 1 \right) 
\\
& \hspace{12cm}  e^{-2 \lb s + is k.(\xi_2-\xi_1)} d \xi_2 d \xi_1 .
\end{align*}

And we compute 
\begin{align*}
\int_{s=0}^\infty&  \int_{t=s}^\infty \left(e^{-\lb (t-s) -i(t-s)k.\xi_1} -1 \right)\left(e^{-\lb (t-s) +i(t-s) k.\xi_2} - 1 \right) 
    e^{-2 \lb s + is k.(\xi_2-\xi_1)} \f{ds \, dt}{(t-s)^{1+2\beta} }
    \\[3mm]
& = \int_{s=0}^\infty \int_{u=0}^\infty \left(e^{-\lb u -i u k.\xi_1} -1 \right)\left(e^{-\lb u +i u k.\xi_2} - 1 \right) 
    e^{-2 \lb s + is k.(\xi_2-\xi_1)} ds \, \f{du}{u^{1+2\beta} }
   \\[3mm]
& = \f{1}{2 \lb - i k.(\xi_2-\xi_1)} \int_{u=0}^\infty \left(e^{-2 \lb u -i u k.(\xi_1-\xi_2)} - e^{-\lb u + i u k.\xi_2} - e^{-\lb u - i u k.\xi_1} + 1 \right) \, \f{du}{u^{1+2\beta} }    
\end{align*}
Compared to the stochastic case, there is one more cancellation in the singularity at $u=0$ which makes that the integral converges for $\beta = 1/2$. Indeed we can write 
\begin{align*}
F(a,b)& =  \int_{u=0}^\infty \left( e^{-(a+b) u} -e^{-a u}  - e^{-b u} + 1 \right) \, \f{du}{u^{2\beta} }    
\\[2mm]
& =  \int_{u=0}^R \left( e^{-(a+b) u} -e^{-a u}  - e^{-b u} + 1 \right) \, \f{du}{u^{2\beta} }   + \int_R^\infty....
\end{align*}
\begin{align*}
|F(a,b)|& \leq C (|a|^2+|b|^2) R^{2-2\beta}+   C R^{-2\beta}
\\[2mm]
& \leq C (|a|^2+|b|^2)^\beta. 
\end{align*}

Therefore, we can act the Fourier multiplier as in Appendix \ref{ap:ald} and find  
\begin{align*}
 \int_{s=0}^\infty  \int_{t=s}^\infty & \frac{ |\wh{\rho_\psi}(k,t) -\wh{\rho_\psi}(k,s) |^2 }{|t-s|^{1+2 \beta} }dt \; ds 
\\[2mm]
&  = \int  \psi \wh{f^0}(k,\xi_1) \; \overline{ \psi \wh{f^0}(k,\xi_2) } \;  \f{ F\big(2 \lb+i k.\xi_1, 2 \lb - i k.\xi_2) }{ \lb+ i k.(\xi_2- \xi_1\big)}   d \xi_2 d \xi_1
\\[2mm]
& \leq C ( {\rm supp} \psi)\; \| \psi \wh{f^0} \|^2_{L^2_\xi} \; Ê\f{ (\lb^2 + | k|^{2})^\beta }{ \lb+  | k | }
\\[2mm]
& \leq C ( {\rm supp} \psi)\; \| \psi \wh{f^0} \|^2_{L^2_\xi}
\end{align*}
after choosing $\beta=1/2$, and we arrive to the result.

\end{proof}

%
%
\bibliographystyle{siam}

\bigskip
\bigskip

\noindent\begin{minipage}[t]{8cm}
\noindent ($^{1}$) Universit\'e Paris-Dauphine, \\
 place du Mar\'echal-de-Lattre-de-Tassigny, \\
 75775 Paris cedex 16, France \\
\end{minipage}
\begin{minipage}[t]{9cm}
\noindent ($^{2}$) Universit\'e Pierre et Marie Curie, Paris 06,  \\
      CNRS UMR 7598 Laboratoire J.-L. Lions, BC187,\\
       4, place Jussieu,  F-75252 Paris 5 \\
	and INRIA Paris-Rocquencourt, EPI Bang
\end{minipage}
\\ \\ \\
\centerline{\begin{minipage}[t]{9cm}
($^{3}$)  Department of Mathematics \\
             University of Chicago \\
             Chicago, IL 60637, USA \\
($^{4}$)  Partially supported by the National Science Foundation.            
 \end{minipage} }
\\ \\

\end{document}